\providecommand{\U}[1]{\protect\rule{.1in}{.1in}}
\newtheorem{theorem}{Theorem}
\newtheorem{definition}{Definition}
\newtheorem{proposition}{Proposition}
\newtheorem{remark}{Remark}
\newenvironment{proof}[1][Proof]{\noindent\textbf{#1.} }{\ \rule{0.5em}{0.5em}}
\begin{document}

\title{On the frequency variogram and on frequency domain methods for the analysis of
spatio-temporal data.}
\author{T. Subba Rao\\University of Manchester,U.K. and \\C. R. Rao AIMSCS, University of Hyderabad Campus, India
\and Gy. Terdik\\University of Debrecen, Hungary}
\date{}
\maketitle

\begin{abstract}
The covariance function and the variogram play very important roles in
modelling and in prediction of spatial and spatio-temporal data. The
assumption of second order stationarity, in space and time, is often made in
the analysis of spatial data and the spatio-temporal data. Several times the
assumption of stationarity is considered to be very restrictive, and
therefore, a weaker assumption that the data is Intrinsically stationary both
in space and time is often made and used, mainly by the geo-statisticians and
other environmental scientists. In this paper we consider the data to be
\ intrinsically stationary. Because of the inclusion of time dimension,the
estimation and derivation of the sampling properties of various estimators
related to spatio-temporal data become complicated. In this paper our object
is to present an alternative way, based on Frequency Domain methods for
modelling the data. Here we consider Discrete Fourier Transforms (DFT) defined
for the (Intrinsic) time series data observed at several locations as our
data, and then consider the estimation of the parameters of spatio-temporal
covariance function, estimation of Frequency Variogram, tests of independence
etc. We use the well known property that the Discrete Fourier Transforms of
stationary time series evaluated at distinct Fourier Frequencies are
asymptotically independent and distributed as complex normal in deriving many
results considered in this paper. Our object here is to emphasize the
usefulness of the Discrete Fourier transforms in the analysis of
spatio-temporal data. Under the intrinsic stationarity condition we consider
the estimation, discuss the sampling properties of the Frequency Variogram
(FV) introduced in an earlier paper by \cite{Subba1} which was proposed as an
alternative to the classical space, time variogram. We show that the FV
introduced is a frequency decomposition of the space-time variogram, and can
be computed using the Fast Fourier Transform algorithms. Assuming that the
DFT's of the intrinsically stationary processes satisfy a Laplacian type of
model, an analytic expression for the space-time spectral density function is
derived for the intrinsic processes and also an expression for the Frequency
Variogram in terms of the spectral density function is also derived. The
estimation of the parameters of the spectrum is also considered. A statistical
test for spatial independence of spatio-temporal data is also briefly
mentioned, and is based on the test proposed earlier by \cite{Wahb} for
testing independence in multivariate stationary (temporally) time
series.\smallskip\newline\textit{Keywords.} Intrinsic stationarity,
spatio-temporal random Processes, Frequency Variogram, Laplacian Model, Test
for spatial Independence.\smallskip\newline\textbf{Dedication.} Professor M.
B. Priestley has made many significant contributions to the nonparametric
estimation of stationary and nonstationary spectral density functions. He was
one of the strong believers of the use of Fourier Transforms, Frequency domain
methods in the analysis of time series. This paper is based on the Fourier
Transforms and their possible application to spatio-temporal data and is
written bearing in mind Professor Priestley's many important contributions in
this area. We dedicate this paper to him.

\end{abstract}

\section{Introduction and Summary}

\label{sect:intro} Spatio temporal data arises in many areas such as
agriculture, geology, environmental sciences, finance, etc. Since the data
comes from these areas are functions of both time and space, any statistical
method developed must take into account both spatial dependence, temporal
dependence and any interaction between these two. In the case of spatial data,
the second order spatial dependence is measured by the second order covariance
function and if the spatial process is second order stationary, then the
second order covariance is a function of spatial lag only. In the case of
spatio-temporal data the dependence is measured by space-time covariance
function and if the process is spatially and temporally stationary, then the
covariance function is a function of the spatial lag and temporal lag. These
functions are usually estimated under the assumption that the random process
is spatially and temporally stationary.

An alternative second order dependence measure is the variogram defined for
both spatial processes and spatio-temporal processes. This function is well
defined under the weaker assumption \ of intrinsic stationarity and in view of
this it is widely used in geo-statistics. Its use is strongly advocated by
\cite{Cressie}, \cite{gringarten2001teacher} and \cite{Sherman} and many others.

If the process is second order stationary, then there is a one to one
correspondence between the variogram and the covariance function. The
estimation of the spatial covariance, spatial variogram \ and their asymptotic
sampling properties \ have been considered by several authors \cite{Cressie},
\cite{Yun}, \cite{Stein}, \cite{Gneit}, \cite{Huang},
\cite{gringarten2001teacher}, \cite{ma2005spatio}. The literature on the
estimation of space-time covariance function and the space-time variogram is
not very extensive in the case of spatio-temporal random processes. The
inclusion of temporal dimension complicates the estimation. The estimation and
the sampling properties of the spatio- temporal covariance function have been
briefly considered by \cite{Li}, \cite{Hucre}, \cite{Stein2}.

In this paper our objective is to consider the Discrete Fourier Transforms
(DFT) of the time series evaluated at Fourier frequencies as our data. If the
observed time series data is equally spaced, one can use the Fast Fourier
Transform (FFT) algorithm to compute the DFTs. Using the DFT's we model the
data. \cite{Subba1} and \cite{2013arXivSRT_GyT} \ use the recently defined '
Frequency variogram ' for the estimation of the parameters of spatio-temporal
covariance function of the process assuming that the DFT's satisfy a Complex
Stochastic Partial Differential Equation(CSPDE).

We show the spatio temporal variogram and the frequency variogram defined
earlier are related. The non-parametric estimation of the frequency variogram
is considered. Its sampling properties are discussed. Investigation of the
sampling properties of the sample Frequency Variogram is much easier compared
to the space-time variogram estimate. We believe that many interesting
problems associated with spatio-temporal random processes can be solved using
the frequency domain methods. We consider here some of these problems.

We \ now summarize the contents of the paper. In Section 2, the space time
covariance function and space time variogram are introduced, and their
estimation, under the assumption of stationarity, is discussed in Section 3.
The properties of Discrete Fourier Transforms of stationary spatial processes,
spectral representation of the processes are considered in Section 4. The
Frequency Variogram and its relation to the classical spatio-temporal
variogram, and the non-parametric estimation of the Frequency variogram are
considered in Sections 5 and 6 and these are considered under the assumption
of Intrinsic stationarity of the process. Assuming that the process is
intrinsically stationary, and the intrinsic process satisfies a Laplacian
model, an analytic expression for the spectral density of the intrinsic
process is obtained in Section 7. The estimation of the parameters of the
spectral density function of the Intrinsic process obtained in section 7, is
considered in Section 8. The frequency variogram and its relation to the
spectral density function is also considered in Section 8. A test for spatial
independence, based on the properties of Complex Wishart distribution, is
described in Section 9 and the test is based on the test for independence by
\cite{Wahb}.

\section{Space-time Covariance function and the \newline Space-Time
Variogram.}

Let $\{Y_{t}(\mathbf{s}),\mathbf{s}\in\mathbb{R}^{d},t\in\mathbb{Z}\}$ denote
the spatio-temporal random process. Two assumptions are often made which are
important for modeling and prediction. They are that the process is second
order stationary in space and time and also that the process is isotropic in
space. The assumption of stationarity can be sometimes unrealistic. In view of
this, another weaker assumption that is often made is that the process is
intrinsically stationary. We note that if the process is second order
stationary, then it implies that the process is intrinsically stationary. But
the converse is not true. We say the process $\{Y_{t}(\mathbf{s})\}$ is
spatially, temporally second order stationary if, for all $t\in Z,\mathbf{s\in
R}^{d},$%
\begin{align*}
E[Y_{t}(\mathbf{s})]  &  =\mu,\\
Var[Y_{t}(\mathbf{s})]  &  =c(0,0)={\sigma_{y}}^{2}<\infty,\\
Cov\left[  Y_{t}(\mathbf{s}),Y_{t+u}(\mathbf{s}+\mathbf{h})\right]   &
=c(\mathbf{h},u),\mathbf{h}\in\mathbb{R}^{d},u\in\mathbb{Z.}%
\end{align*}

We note $c(\mathbf{h},0)$ and $c(0,u)$ correspond to the purely spatial,
purely temporal covariances respectively. Without loss of any generality we
assume that $\mu=0$.

The random process is said to be isotropic if%
\[
c(\mathbf{h},u)=c\left(  \Vert\mathbf{h}\Vert;u\right)  ,\mathbf{h}%
\in\mathbb{R}^{d},u\in\mathbb{Z},
\]
where $\Vert\mathbf{h}\Vert$ is the Euclidean distance. The process is said to
be fully symmetric if $c(\mathbf{h},u)=c(-\mathbf{h},u)=c(\mathbf{h}%
,-u)=c(-\mathbf{h},-u)$ (see \cite{Gneit2}). The process $\{Y_{t}%
(\mathbf{s})\}$ is intrinsically spatially, temporarily stationary if the
incremental process, for $u\in Z,\mathbf{h\in R}^{d}$, $Y_{t}(\mathbf{s}%
)-Y_{t+u}(\mathbf{s}+\mathbf{h})$ satisfies the following (see \cite{Cressie2}%
,p.315)
\begin{align*}
E\left(  (Y_{t}(\mathbf{s})-Y_{t+u}(\mathbf{s}+\mathbf{h})\right)   &  =0,\\
Var\left[  (Y_{t}(\mathbf{s})-Y_{t+u}(\mathbf{s}+\mathbf{h})\right]   &
=\gamma(\mathbf{h},u)<\infty..
\end{align*}
\newline If $\{Y_{t}(\mathbf{s})\}$ is isotropic, then
\[
\gamma(\mathbf{h},u)=\gamma(\left\Vert \mathbf{h}\right\Vert ,u),
\]
where $\gamma(\mathbf{h},u)$ is also known as the structure function(
\cite{Yag}.)

The spatio-temporal variogram is defined as
\[
\gamma(\mathbf{h},u)=2\tilde{\gamma}(\mathbf{h},u)=Var\left[  (Y_{t}%
(\mathbf{s})-Y_{t+u}(\mathbf{s}+\mathbf{h})\right]  ,
\]
and $\tilde{\gamma}(u,\mathbf{h})$ is defined as the semi spatio-temporal
variogram. We note that one can define the variogram under the weaker
assumption of intrinsic stationarity. In other words we do not need the
assumption of stationarity of the original processes. This phenomenon of
differencing\ \ in space to achieve stationarity is similar to what we have in
the case of random processes with stationary increments in time, for instance,
the Brownian motion.

Suppose the process$\{Y_{t}(\mathbf{s})\}$ is spatially and temporally
stationary, then we can show
\begin{align*}
\gamma(\mathbf{h},u)  &  =2\left[  Var(Y_{t}(\mathbf{s}))-Cov\left(
Y_{t}(\mathbf{s}),Y_{t+u}(\mathbf{s}+\mathbf{h})\right)  \right] \\
&  =2\left[  c(0,0)-c(\mathbf{h},u)\right]  =2\tilde{\gamma}(\mathbf{h},u),
\end{align*}
and we note that there is a one to one correspondence between $\gamma
(\mathbf{h},u)$ and $c(\mathbf{h},u)$ in the case of stationary processes. One
can show that the covariance function $c(\mathbf{h},u)$ is positive
semi-definite and $\gamma(\mathbf{h},u)$ is conditionally negative definite.

\section{Estimation of $c(\mathbf{h},u)$ and $\gamma\left(  \mathbf{h}%
,u\right)  .$}

Let $\left\{  Y_{t}(\mathbf{s}_{i})\text{; }i=1,2,....,m\text{; }%
t=1,2,....,n\right\}  $ be a sample from the zero mean, stationary spatio-
temporal random process ${Y_{t}(\mathbf{s})}$. We define the estimates of
$c(\mathbf{h},u)$ and $\gamma(\mathbf{h},u)$ as follows. (see \cite{Sherman}
\ for details). Let
\[
\hat{c}(\mathbf{h},u)=\frac{1}{|N(\mathbf{h},u)|}\sum_{N(\mathbf{h}%
,u)}[Y_{t_{i}}(\mathbf{s}_{i})-\overline{Y}(\mathbf{s}_{i})][Y_{t_{j}%
}(\mathbf{s}_{j})-\overline{Y}(\mathbf{s}_{j})],
\]
where
\[
\overline{Y}(\mathbf{s}_{i})=\frac{1}{n}\sum_{t=1}^{n}Y_{t}(\mathbf{s}_{i}),
\]
and
\[
\hat{\gamma}(\mathbf{h},u)=\frac{1}{|N(\mathbf{h},u)|}\sum_{N(\mathbf{h}%
,u)}{\left[  Y_{t_{i}}(\mathbf{s}_{i})-[Y_{t_{j}}(\mathbf{s}_{j})\right]
}^{2},
\]
where $N(\mathbf{h},u)=\left\{  (\mathbf{s}_{i},t_{i}),(\mathbf{s}_{j}%
,t_{j});\mathbf{s_{i}}-\mathbf{\mathbf{s}_{j}}=\mathbf{h}\quad\text{and}\quad
t_{i}-t_{j}=u\right\}  $. The estimator $\hat{\gamma}(\mathbf{h},u)$ is widely
known as Matheron estimator. In this paper \ we are assuming that the time
series data observed at all $m$ locations are equally spaced and also there
are no missing values. It is interesting to investigate the properties of the
estimators proposed here when these assumptions do not hold.

Under certain conditions, \cite{Li} have shown that the sample spatio-temporal
covariance function defined above is asymptotically normal.

Based on $\hat{\gamma}(\mathbf{h},u)$, \cite{Cressie} and \cite{Huang} have
proposed a weighted least squares criterion for estimating the parameters of
the theoretical variogram $\gamma(\mathbf{h},u|\theta)$, and \cite{Gneit2}
proposed a similar criterion for estimating the parameters based on the space-
time covariance function $\hat{c}(\mathbf{h},u)$. \cite{Subba1} have proposed
a frequency domain method for the estimation of the parameters which is robust
against departures from Gaussianity and also computationally efficient. The
method of estimation proposed by \cite{Subba1} is similar to Whittle
likelihood approach and \ it is based on the frequency variogram, and the
proposed criterion which is easy to compute and is based on Discrete Fourier
Transforms. In the following section we define the Frequency Variogram (FV)
and derive the sampling properties of the estimator.

\section{Discrete Fourier transforms and the spectral representation of the
process $\left\{  Y_{t}(\mathbf{s})\right\}  .$}

We follow the notation introduced in the paper of \cite{2013arXivSRT_GyT}.
Here we briefly highlight and summarize the results we need for our present
purposes and for further details we refer to \cite{2013arXivSRT_GyT} and the
books and papers cited in those papers.

We assume the random process $\left\{  Y_{t}(\mathbf{s})\right\}  $ is second
order spatially and temporally stationary. Therefore, the process has the
spectral representation given by
\[
Y_{t}(\mathbf{s})=\int_{R^{d}}\int_{-\pi}^{\pi}e^{i(\mathbf{s}\cdot
\boldsymbol{\lambda}+t\omega)}dZ_{y}(\boldsymbol{\lambda},\omega),
\]
where $\mathbf{s}\cdot\boldsymbol{\lambda}=\sum_{i=1}^{d}s_{i}\lambda_{i}$ and
$\int_{R^{d}},$ represents a d-fold multiple integral, and $Z_{y}%
(\boldsymbol{\lambda},\omega)$ is a zero mean complex valued random process
with orthogonal increments and
\begin{align*}
E[dZ_{y}(\boldsymbol{\lambda},\omega)]  &  =0,\\
E{|dZ_{y}(\boldsymbol{\lambda},\omega)|}^{2}  &  =dF_{y}(\boldsymbol{\lambda
},\omega),
\end{align*}
where $dF_{y}(\boldsymbol{\lambda},\omega)$ is a spectral measure. If we
assume further that $dF_{y}(\boldsymbol{\lambda},\omega)$ is absolutely
continuous with respect to Lebesgue measure according to the arguments
$\boldsymbol{\lambda}$ and $\omega$, then $dF_{y}(\boldsymbol{\lambda}%
,\omega)=f_{y}(\boldsymbol{\lambda},\omega)d\boldsymbol{\lambda}d\omega$,
where $d\boldsymbol{\lambda}=\prod_{i=1}^{d}d{\lambda}_{i}$. Here
$f_{y}(\boldsymbol{\lambda},\omega)$ is a strictly positive, real valued
function and is defined as the spatio- temporal spectrum of the random process
$\left\{  Y_{t}(\mathbf{s})\right\}  $, and $-\infty<{\lambda}_{1},{\lambda
}_{2},...,{\lambda}_{d}<\infty,-\pi\leq\omega\leq\pi$. In view of the
orthogonality of the function $Z_{y}(\boldsymbol{\lambda},\omega)$, it can be
shown that
\begin{equation}
c(\mathbf{h},u)=\int_{\mathbf{R}^{d}}\int_{-\pi}^{\pi}e^{i(\mathbf{h}%
\cdot\boldsymbol{\lambda}+u\omega)}f_{y}(\boldsymbol{\lambda},\omega)d\omega
d\boldsymbol{\lambda,} \label{eq4.1}%
\end{equation}
and by inversion we get
\begin{equation}
f_{y}(\boldsymbol{\lambda},\omega)=\frac{1}{{(2\pi)}^{d+1}}\sum_{u}%
\int_{-\infty}^{\infty}e^{-i(\mathbf{h}\cdot\boldsymbol{\lambda}+u\omega
)}c(\mathbf{h},u)d\mathbf{h.} \label{Spectrum}%
\end{equation}
From (\ref{eq4.1}), we have
\begin{equation}
c(\mathbf{0},u)=\int_{-\pi}^{\pi}e^{iu\omega}g_{0}(\omega)d\omega,
\label{Covariance}%
\end{equation}
where $g_{0}(\omega)=\int_{-\infty}^{\infty}f_{y}(\boldsymbol{\lambda}%
,\omega)d\boldsymbol{\lambda}$ is the second order temporal spectral density
function of the process $\left\{  Y_{t}(\mathbf{s})\right\}  $, and in view of
our assumption that the process is spatially, temporally stationary
$g_{0}(\omega)$ is same for all the locations $\mathbf{s}$. We note
$c(\mathbf{h},u)=c(-\mathbf{h},-u)$ and $f_{y}(\boldsymbol{\lambda}%
,\omega)=f_{y}(-\boldsymbol{\lambda},-\omega)$, and $f_{y}(\boldsymbol{\lambda
},\omega)>0$ for all $\boldsymbol{\lambda}$ and $\omega$.

Here $\boldsymbol{\lambda}$ is the spatial frequency associated with the
spatial coordinates $\mathbf{s}_{i}$ and is usually called the wave number and
$\omega$ is the temporal frequency associated with time.

Let $\left\{  Y_{t}(\mathbf{s_{i}})\right\}  ;i=1,2,...,m;t=1,2,...n$ be a
sample from the zero mean, stationary spatio-temporal random process $\left\{
Y_{t}(\mathbf{s})\right\}  $. Consider the time series data at the location
$\mathbf{s}_{i}$ and define the Discrete Fourier transform (DFT)
\begin{equation}
J_{\mathbf{s}_{i}}^{y}(\omega_{k})=\frac{1}{\sqrt{2\pi n}}\sum_{t=1}^{n}%
Y_{t}(\mathbf{s}_{i})e^{-it\omega_{k}};\text{ \ \ }(i=1,2,...,...,m)
\label{eq4.3}%
\end{equation}
where $\omega_{k}=\frac{2\pi k}{n},k=0,1,2,...,\left[  \frac{n}{2}\right]  $.
We note that the Discrete Fourier transforms can be evaluated using the Fast
Fourier Transform (FFT) algorithm(FFT), and the number of operations required
to calculate FFT from a time series of length n, is of the order $n(\ln n)$.
By inversion, we obtain from (\ref{eq4.3})
\begin{equation}
Y_{t}(\mathbf{s})=\sqrt{\frac{n}{2\pi}}\int_{-\pi}^{\pi}e^{it\omega
}J_{\mathbf{s}}^{y}(\omega)d\omega. \label{A1}%
\end{equation}
The above representation shows that the $\left\{  Y_{t}(\mathbf{s})\right\}  $
can be decomposed into sine and cosine terms and the complex valued random
variable DFT, $J_{s}^{y}(\omega)$ can be considered as the amplitude
corresponding to these sine and cosine basis functions.

We will briefly summarize some well known results associated with DFT's (see
Appendix) which will be required later. \ For details regarding properties of
the Discrete Fourier Transforms for stationary processes, we refer to the
books \ of \cite{Brilli} and \cite{giraitis2012large}. It is well known that
\ under some structural assumptions (see \cite{giraitis2012large}) the
discrete Fourier transforms $\left\{  J_{\mathbf{s}}^{y}(\omega_{k})\right\}
$ evaluated at discrete Fourier frequencies ${\omega_{k}}$ are asymptotically
uncorrelated, and is distributed as complex normal (see for details
\cite{Brilli} \ and \cite{giraitis2012large}.

For example, for large $n$, and for a specific $\omega_{k}$ and for a specific
$\mathbf{s}$, $\left\{  J_{\mathbf{s}}^{y}(\omega_{k})\right\}  $ is
approximately distributed as complex normal with mean zero and variance
$\left\{  g_{\mathbf{s}}(\omega_{k})\right\}  $ which is the second order
temporal spectrum of the process at the location $\mathbf{s}$. In view of the
spatial stationarity assumption, $g_{\mathbf{s}}(\omega_{k})$ is same for all
locations, and we denote this common temporal spectrum by $g_{0}(\omega_{k})$.

Let $I_{\mathbf{s}}^{y}(\omega_{k})={|J_{\mathbf{s}}^{y}(\omega_{k})|}^{2}$ be
the periodogram, and let $I_{\mathbf{s}_{i},\mathbf{s}_{j}}^{y}(\omega
_{k})=J_{\mathbf{s}_{i}}^{y}(\omega_{k}){J_{\mathbf{s}_{j}}^{y\ast}}%
(\omega_{k})$ be the cross periodogram between the two time series $\left\{
Y_{t}(\mathbf{s}_{i})\right\}  $ and $\left\{  Y_{t}(\mathbf{s}_{j})\right\}
.$ In the appendix we summarize some properties of the periodograms (see also
\cite{2013arXivSRT_GyT}). In the following section, we define the Frequency
Variogram and consider its estimation and also discuss the asymptotic sampling
properties of the estimator proposed.

\section{Frequency Variogram (FV), Properties and its estimation.}

As stated earlier, variogram is used as an alternative measure of second order
dependence. It can be defined under weaker conditions and as such it is widely
used. Though the statistical properties of the sample variogram are well
studied in the case of spatial processes, the estimation and the asymptotic
properties of various estimators defined for spatio-temporal processes, such
as $\hat{\gamma}(\mathbf{h},u)$ defined earlier are not well investigated and
this could be due to the inclusion of the time dimension in the processes. To
circumvent such problems, \cite{Subba1} have considered frequency domain
approach for the statistical analysis, model construction and estimation.

The authors \cite{Subba1} have introduced frequency variogram as an
alternative to spatio-temporal variogram defined earlier and was found to be
very useful in the estimation of parameters of spatio-temporal spectrum. As no
inversion of high dimensional matrices are required in the estimation
suggested, the computation of the minimizing criterion is easy. In this paper
we consider further properties of the Frequency Variogram and also discuss its
nonparametric estimation. We use the FV as a tool for estimating the
parameters of the spatio-temporal spectrum of the intrinsic processes.

Let $\left\{  J_{\mathbf{s}}^{y}(\omega_{k})\right\}  $ be the DFT evaluated
at the Fourier frequency $\omega_{k}=\frac{2\pi k}{n};$ $k=0,1,2,...,\left[
\frac{n}{2}\right]  $ calculated using the time series data $\left\{
Y_{t}(\mathbf{s})\right\}  $.

The frequency variogram is defined, for a fixed spatial lag $\mathbf{h}$ and
at the location $\mathbf{s}$, as follows.

Let
\[
X_{t}^{\mathbf{h}}(\mathbf{s})=Y_{t}(\mathbf{s})-Y_{t}(\mathbf{s+h}%
),t=1,2,...n.
\]
We have
\[
E[X_{t}^{\mathbf{h}}(\mathbf{s})]=0,Var[X_{t}^{\mathbf{h}}(\mathbf{s}%
)]=\gamma(\mathbf{h,}0\mathbf{).}%
\]
Define the DFT of the time series \ $\{X_{t}^{\mathbf{h}}(\mathbf{s})\}$ by%
\[
\ J_{\mathbf{s,s+h}}^{x}(\omega)=\frac{1}{\sqrt{2\pi n}}\sum_{t=1}^{n}%
X_{t}^{\mathbf{h}}(\mathbf{s)}e^{-it\omega}=J_{\mathbf{s}}^{y}(\omega
)-{\normalsize J}_{\mathbf{s+h}}^{y}(\omega),
\]
and the periodogram by%
\[
{\normalsize I}_{\mathbf{s,s+h}}^{x}(\omega)\ \ =|J_{\mathbf{s,s+h}}%
^{x}(\omega)|^{2}.
\]
\ \ \ \ \ \ 

\begin{definition}%
\begin{align*}
G_{\mathbf{s,s+h}}^{x}(\omega)  &  =2\widetilde{G}_{\mathbf{s,s+h}}^{x}%
(\omega)\\
&  =E|J_{\mathbf{s}}^{y}(\omega)-J_{\mathbf{s}+\mathbf{h}}^{y}(\omega)|^{2}\\
&  =E[I_{\mathbf{s,s+h}}^{x}(\omega)],
\end{align*}
for all $|\omega|\leq\pi$. \cite{Subba1} \ defined $G_{\mathbf{s,s+h}}%
^{x}(\omega)$ as the Frequency Variogram.
\end{definition}

We note $\ $\ that $J_{\mathbf{s,s+h}}^{x}(\omega)$ is the DFT of the
incremental random process $\left\{  X_{t}^{\mathbf{h}}(\mathbf{s})\right\}
$. If the incremental process defined is spatially intrinsically stationary,
and also temporally stationary, then the discrete Fourier transforms $\left\{
J_{\mathbf{s,s+h}}^{x}(\omega_{k})\right\}  $ are asymptotically uncorrelated,
and distributed as Complex Gaussian (\cite{Brilli} and
\cite{giraitis2012large} ). These functions are well defined and no
assumptions of spatial, temporal stationarity of the process $\left\{
Y_{t}(\mathbf{s})\right\}  $ is required. The FV $G_{\mathbf{s,s+h}}%
^{x}(\omega)$ can be used as a measure of dissimilarity between the two random
process $\left\{  Y_{t}(\mathbf{s})\right\}  $ and $\left\{  Y_{t}%
(\mathbf{s+h})\right\}  $ at the frequency $\omega$. As one would expect this
measure to increase as the spatial lag $\left\Vert \mathbf{h}\right\Vert $
increases and tends to zero as $||\mathbf{h||\rightarrow0}$. Some further
comments on FV are in order.

\begin{remark}
The FV given by $G_{\mathbf{s,s+h}}^{x}(\omega)$ is well defined and defined
under the weaker condition of Intrinsic stationarity.
\end{remark}

\begin{remark}
If the intrinsic process $\left\{  X_{t}^{\mathbf{h}}(\mathbf{s})\right\}  $
is spatially and temporally stationary, its second order periodogram
${\normalsize I}_{\mathbf{s,s+h}}^{x}(\omega)$ is asymptotically an unbiased
estimator of the temporal spectrum of the intrinsic process$\left\{
X_{t}^{\mathbf{h}}(\mathbf{s})\right\}  .$ In view of the assumption of the
spatial stationarity of the intrinsic process, the second order spectrum does
not depend on the location $\mathbf{s.}$ Therefore estimating the FV is same
as estimating the second order spectral density function of the intrinsic
process $\left\{  X_{t}^{\mathbf{h}}(\mathbf{s})\right\}  .$ This estimation
is considered in section 6.
\end{remark}

In the following we show the relationship between the spatio temporal
variogram $\gamma(\mathbf{h},u)$ and the FV.

\begin{proposition}
Let%
\[
G_{\mathbf{s,s+h}}^{x}(\omega)=E{|J_{\mathbf{s,s+h}}^{x}(\omega)|}^{2},
\]
then%
\begin{equation}
\int_{-\pi}^{\pi}G_{\mathbf{s,s+h}}^{x}(\omega)d\omega=\gamma(\mathbf{h},0).
\label{pars}%
\end{equation}

\end{proposition}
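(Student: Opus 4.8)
The plan is to recognize (\ref{pars}) as a Parseval identity for the discrete Fourier transform of the finite incremental series $\{X_{t}^{\mathbf{h}}(\mathbf{s})\}_{t=1}^{n}$, combined with the orthogonality of the complex exponentials $e^{-it\omega}$ over integer $t$ when integrated across one full period $[-\pi,\pi]$. The identity is exact, so no asymptotics are needed and the whole argument reduces to careful bookkeeping of the normalizing constants.

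First I would expand the periodogram inside the expectation. Because $X_{t}^{\mathbf{h}}(\mathbf{s})$ is real, the conjugate $\overline{J_{\mathbf{s},\mathbf{s}+\mathbf{h}}^{x}(\omega)}$ is obtained simply by flipping the sign of the exponent, whence
\[
|J_{\mathbf{s},\mathbf{s}+\mathbf{h}}^{x}(\omega)|^{2}
=\frac{1}{2\pi n}\sum_{t=1}^{n}\sum_{t'=1}^{n}X_{t}^{\mathbf{h}}(\mathbf{s})\,X_{t'}^{\mathbf{h}}(\mathbf{s})\,e^{-i(t-t')\omega}.
\]

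Next I would integrate over $\omega\in[-\pi,\pi]$ and push the integral inside the finite double sum. Since $t-t'$ is an integer, $\int_{-\pi}^{\pi}e^{-i(t-t')\omega}\,d\omega$ equals $2\pi$ when $t=t'$ and $0$ otherwise, so only the diagonal terms survive and
\[
\int_{-\pi}^{\pi}|J_{\mathbf{s},\mathbf{s}+\mathbf{h}}^{x}(\omega)|^{2}\,d\omega
=\frac{1}{n}\sum_{t=1}^{n}\bigl(X_{t}^{\mathbf{h}}(\mathbf{s})\bigr)^{2}.
\]

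Finally I would take expectations on both sides; as the sums are finite, interchanging $E$ with the integral is immediate. Using $E[X_{t}^{\mathbf{h}}(\mathbf{s})]=0$ gives $E[(X_{t}^{\mathbf{h}}(\mathbf{s}))^{2}]=\mathrm{Var}[X_{t}^{\mathbf{h}}(\mathbf{s})]=\gamma(\mathbf{h},0)$, a value independent of $t$, so the right-hand side becomes $\frac{1}{n}\cdot n\cdot\gamma(\mathbf{h},0)=\gamma(\mathbf{h},0)$, which is exactly (\ref{pars}). There is no genuine obstacle here: the only point demanding care is that the factor $1/(2\pi n)$ and the $2\pi$ produced by the orthogonality integral cancel to leave the average $\frac{1}{n}\sum_{t}$, which is precisely what converts the sum of increment variances into the single value $\gamma(\mathbf{h},0)$.
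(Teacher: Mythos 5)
Your proposal is correct and is essentially the paper's own argument: the paper simply cites Parseval's theorem, and your computation (expanding the periodogram, using $\int_{-\pi}^{\pi}e^{-i(t-t')\omega}\,d\omega=2\pi\delta_{t,t'}$, then taking expectations with $E[(X_{t}^{\mathbf{h}}(\mathbf{s}))^{2}]=\gamma(\mathbf{h},0)$) is exactly that theorem written out for the DFT normalization $1/\sqrt{2\pi n}$ used here. Nothing is missing; you have merely made explicit the bookkeeping the paper leaves implicit.
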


\begin{proof}
An application of Parseval's theorem gives the above result.
\end{proof}

In the derivation of the above we used the assumption that the incremental
process $\left\{  X_{t}^{\mathbf{h}}(\mathbf{s})\right\}  $ is stationary
temporally and spatially even though the original process $\left\{
Y_{t}(\mathbf{s})\right\}  $ may not be spatially, temporally stationary.

The above result (\ref{pars}) shows that the FV, $G_{\mathbf{s,s+h}}%
^{x}(\omega)$ is the frequency decomposition of the classical spatio temporal
variogram $\gamma(\mathbf{h},u)$ when $u=0$, similar to the frequency
decomposition we have for the power (variance) of the stationary random
process in terms of the power spectral density function. Since $\gamma
(\mathbf{h},u)$ is a measure of dissimilarity between two spatial processes
separated by lag $\mathbf{h}$, $G_{\mathbf{s,s+h}}^{x}(\omega)$ is also a
measure of dissimilarity of the two process at the frequency $\omega$. By
plotting this function as a function of $\omega$, one can observe in which
frequency band there is a large amount of lack of similarity. This information
could be useful in prediction where one can predict a time series using the
time series data from other neighborhood locations.

\begin{proposition}
.Let $\left\{  {Y}_{t}(\mathbf{s})\right\}  $ be a zero-mean second order
stationary process in space and time and let $\left\{  J_{\mathbf{s}_{i}}%
^{y}(\omega)\right\}  (i=1,2,...,m)$ be the DFT 's of $\left\{  {Y}%
_{t}(\mathbf{s}_{i}),i=1,2,...m\right\}  $. Let $G_{\mathbf{s}_{i}%
,\mathbf{s}_{j}}^{x}(\omega)$ be the frequency variogram. Then

\begin{enumerate}
\item The covariance function $g_{\mathbf{s}_{i},\mathbf{s}_{j}}^{y}%
(\omega)=cov(J_{\mathbf{s}_{i}}^{y}(\omega),J_{\mathbf{s}_{j}}^{x}(\omega))$
is a positive semi-definite function.

\item The FV $G_{\mathbf{s}_{i},\mathbf{s}_{j}}^{x}(\omega)$ is conditionally
negative definite.
\end{enumerate}
\end{proposition}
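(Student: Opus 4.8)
The plan is to prove the two parts separately, treating each as a standard positive/negative-definiteness statement for functions indexed by spatial locations at a fixed frequency $\omega$.

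For part (1), I would first correct the evident typo in the definition of $g_{\mathbf{s}_i,\mathbf{s}_j}^y(\omega)$: it should read $g_{\mathbf{s}_i,\mathbf{s}_j}^y(\omega)=\mathrm{cov}\!\left(J_{\mathbf{s}_i}^y(\omega),J_{\mathbf{s}_j}^y(\omega)\right)$, i.e. the cross-covariance of the DFTs at the same frequency. Positive semi-definiteness then reduces to verifying that for any locations $\mathbf{s}_1,\dots,\mathbf{s}_m$ and any complex scalars $a_1,\dots,a_m$, the quantity $\sum_{i,j} a_i \bar a_j\, g_{\mathbf{s}_i,\mathbf{s}_j}^y(\omega)$ is nonnegative. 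The key step is to write this as the variance of a linear combination of DFTs:
\[
\sum_{i,j} a_i \bar a_j\, g_{\mathbf{s}_i,\mathbf{s}_j}^y(\omega)
= \mathrm{Var}\!\left(\sum_{i=1}^m a_i\, J_{\mathbf{s}_i}^y(\omega)\right) \ge 0,
\]
using that $g^y$ is by construction a covariance (equivalently, the Gram/covariance matrix of the vector $(J_{\mathbf{s}_1}^y(\omega),\dots,J_{\mathbf{s}_m}^y(\omega))$ is Hermitian positive semi-definite). Since a variance is always nonnegative, this is immediate once the quadratic-form identity is set up.

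For part (2), conditional negative definiteness of the frequency variogram means that for any locations and any complex scalars $a_1,\dots,a_m$ with $\sum_i a_i = 0$, one has $\sum_{i,j} a_i \bar a_j\, G_{\mathbf{s}_i,\mathbf{s}_j}^x(\omega) \le 0$. The approach is to expand $G_{\mathbf{s}_i,\mathbf{s}_j}^x(\omega)=E|J_{\mathbf{s}_i}^y(\omega)-J_{\mathbf{s}_j}^y(\omega)|^2$ in terms of the covariance kernel $g^y$ from part (1):
\[
G_{\mathbf{s}_i,\mathbf{s}_j}^x(\omega)
= g_{\mathbf{s}_i,\mathbf{s}_i}^y(\omega) + g_{\mathbf{s}_j,\mathbf{s}_j}^y(\omega)
- g_{\mathbf{s}_i,\mathbf{s}_j}^y(\omega) - g_{\mathbf{s}_j,\mathbf{s}_i}^y(\omega).
\]
Substituting into the quadratic form and invoking the constraint $\sum_i a_i = 0$, the diagonal terms $g_{\mathbf{s}_i,\mathbf{s}_i}^y$ and $g_{\mathbf{s}_j,\mathbf{s}_j}^y$ drop out (each is multiplied by $\sum_j \bar a_j = 0$ or $\sum_i a_i = 0$), leaving exactly $-2\,\mathrm{Re}\sum_{i,j} a_i \bar a_j\, g_{\mathbf{s}_i,\mathbf{s}_j}^y(\omega)$, which equals $-2\,\mathrm{Var}\!\left(\sum_i a_i J_{\mathbf{s}_i}^y(\omega)\right)\le 0$ by part (1). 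This mirrors the classical derivation showing a variogram is conditionally negative definite whenever the covariance is positive semi-definite.

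The main obstacle here is not the algebra — both reductions are the standard variance/Gram-matrix arguments — but rather getting the bookkeeping of the Hermitian quadratic form right under the zero-sum constraint, and confirming that the cancellation of diagonal terms is genuinely complete (one must be careful that the constraint $\sum a_i=0$ is applied to both the $a_i$ sum and its conjugate $\sum\bar a_j$). I would state explicitly at the outset that $g^y$ is Hermitian, $g_{\mathbf{s}_i,\mathbf{s}_j}^y(\omega)=\overline{g_{\mathbf{s}_j,\mathbf{s}_i}^y(\omega)}$, so that $g_{\mathbf{s}_i,\mathbf{s}_j}^y+g_{\mathbf{s}_j,\mathbf{s}_i}^y = 2\,\mathrm{Re}\,g_{\mathbf{s}_i,\mathbf{s}_j}^y$ and the final form is manifestly real and nonpositive. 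No asymptotic or distributional machinery is needed: both claims are exact algebraic consequences of $G^x$ and $g^y$ being defined through second moments of the DFTs.
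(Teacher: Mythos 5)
Your proposal is correct and follows essentially the same route as the paper: part (1) is the identical variance-of-a-linear-combination argument, and part (2) expands $G_{\mathbf{s}_i,\mathbf{s}_j}^{x}(\omega)=E|J_{\mathbf{s}_i}^{y}(\omega)-J_{\mathbf{s}_j}^{y}(\omega)|^{2}$ in terms of the DFT covariances and uses $\sum_i a_i=0$ to reduce the quadratic form to $-2\,\mathrm{Var}\bigl(\sum_i a_i J_{\mathbf{s}_i}^{y}(\omega)\bigr)\leq 0$, exactly as in the paper. The only minor difference is that the paper also invokes spatial stationarity (location-independence of the spectrum) to cancel the diagonal terms, whereas your bookkeeping shows the zero-sum constraint alone suffices; this is a slight sharpening of the same argument, not a different method.
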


\begin{proof}
Consider the sum $S_{1}(\omega)=\sum_{i=1}^{m}a_{i}J_{\mathbf{s}_{i}}%
^{y}(\omega)$ where $\left\{  a_{i}\right\}  $ can be complex. Then
\[
VarS_{1}(\omega)=\sum\sum a_{i}{a_{j}}^{\ast}Cov(J_{\mathbf{s}_{i}}^{y}%
(\omega),J_{\mathbf{s}_{j}}^{y}(\omega))\geq0.
\]
Hence the result (1).\newline To prove the second result, assume $\sum
a_{i}=0$. Then we can show that%
\begin{align*}
\sum\sum a_{i}{a_{j}}^{\ast}G_{\mathbf{s}_{i},\mathbf{s}_{j}}^{x}(\omega)  &
=\sum\sum a_{i}{a_{j}}^{\ast}E{|J_{\mathbf{s}_{i}}^{y}(\omega)-J_{\mathbf{s}%
_{j}}^{y}(\omega)|}^{2}\\
&  =-2\text{ }Var[\sum_{i=1}^{m}a_{i}J_{\mathbf{s}_{i}}^{y}(\omega)]\leq0.
\end{align*}
In the above derivation we used the fact that $\sum a_{i}=0$ and also the
second order spectral density function does not depend on the location
$\mathbf{s}_{i}$ because of stationarity assumption. Hence the result (2) of
the proposition.
\end{proof}

\subsection{Frequency Variogram and Nugget Effect:}

For illustration purposes we consider the case $d=2$. Suppose instead of
observing the process $\left\{  {Y}_{t}(\mathbf{s}),\mathbf{s}\in
\mathbb{R}^{2},t\in\mathbb{Z}\right\}  $, we observe a corrupted random
process $\left\{  \widetilde{Y}_{t}(\mathbf{s}),\mathbf{s}\in\mathbb{R}%
^{2},t\in\mathbb{Z}\right\}  $, where for each $\mathbf{s}$ and $t$,%
\[
\widetilde{Y}_{t}(\mathbf{s})=Y_{t}(\mathbf{s})+\eta_{t}(\mathbf{s}),
\]
and $\left\{  {Y}_{t}(\mathbf{s})\right\}  $ and $\left\{  \eta_{t}%
(\mathbf{s})\right\}  $ are zero mean spatially, temporally stationary
processes and $\left\{  {Y}_{t}(\mathbf{s})\right\}  $ and $\left\{  \eta
_{t}(\mathbf{s})\right\}  $ are independent for all $t$ and $\mathbf{s}$, it
is defined as a generalized process. Further, we assume that $\left\{
\eta_{t}(\mathbf{s})\right\}  $ is a white noise process in space and time
with the second order space-time spectrum $g_{\eta}(\boldsymbol{\lambda
},\omega)=\frac{{\sigma_{\eta}}^{2}}{{(2\pi)}^{3}}$ for all
$\boldsymbol{\lambda}$ and $\omega$. Define the DFT of the incremental random
process of $\{\widetilde{Y}_{t}(\mathbf{s})\},$
\[
\ (\widetilde{Y}_{t}(\mathbf{s})-\widetilde{Y}_{t}(\mathbf{s+h}))=(Y_{t}%
(\mathbf{s})-Y_{t}(\mathbf{s+h}))+(\eta_{t}(\mathbf{s})-\eta_{t}%
(\mathbf{s+h}),
\]
then we have
\[
\widetilde{J}_{\mathbf{s,s+h}}(\omega)={J_{\mathbf{s,s+h}}^{x}}(\omega
)+{J_{\mathbf{s,s+h}}^{\eta}}(\omega),\quad|\omega|\leq\pi,
\]
where%
\begin{align*}
\widetilde{J}_{\mathbf{s,s+h}}(\omega)  &  =\frac{1}{\sqrt{2\pi n}}%
\sum(\widetilde{Y}_{t}(\mathbf{s})-\widetilde{Y}_{t}(\mathbf{s}+\mathbf{h}%
))e^{-i\omega t},\\
J_{\mathbf{s,s+h}}^{x}(\omega)  &  =\frac{1}{\sqrt{2\pi n}}\sum({Y}%
_{t}(\mathbf{s})-{Y}_{t}(\mathbf{s}+\mathbf{h}))e^{-i\omega t},\\
{J_{\mathbf{s,s+h}}^{\eta}}(\omega)  &  =\frac{1}{\sqrt{2\pi n}}\sum(\eta
_{t}(\mathbf{s})-\eta_{t}(\mathbf{s}+\mathbf{h}))e^{-i\omega t}.
\end{align*}
Define the FV for the process $\left\{  \widetilde{Y}_{t}(\mathbf{s})\right\}
$,%
\begin{align}
{\widetilde{G}}_{\mathbf{s,s+h}}(\omega)  &  =E{|}\widetilde{J}%
{{_{\mathbf{s,s+h}}}(\omega)|}^{2}\nonumber\\
&  =E{|{{J{{^{x}}_{\mathbf{s,s+h}}}}}(\omega)|}^{2}+E{|{{J^{\eta}%
}_{\mathbf{s,s+h}}}(\omega)|}^{2}\nonumber\\
\text{ }  &  ={G}_{\mathbf{s,s+h}}^{x}(\omega)+\frac{2\sigma_{\eta}^{2}%
}{{(2\pi)}^{3}}. \label{eq5.1}%
\end{align}
.

The above result follows because of our assumption that the random process
$\left\{  \eta_{t}(\mathbf{s})\right\}  $ is a white noise. \ From
(\ref{eq5.1}), we observe that as $\left\Vert \mathbf{h}\right\Vert
\rightarrow0$, $G_{\mathbf{s,s+h}}^{x}(\omega)\rightarrow0$ for all $\omega$
and, therefore, $\widetilde{G}_{\mathbf{s,s+h}}(\omega)\rightarrow\frac
{\sigma_{\eta}^{2}}{{(2\pi)}^{3}}$ as $\left\Vert \mathbf{h}\right\Vert
\rightarrow0$.

If we plot $\int G_{\mathbf{s,s+h}}(\omega)d\omega$ as a function of
$\left\Vert \mathbf{h}\right\Vert $ and if we observe a jump near the origin
$\left\Vert \mathbf{h}\right\Vert $ =$0$, this could be due to the presence of
\ white noise in the process. In other words, the observations are corrupted
by white noise. This effect is usually called the "Nugget effect" in
geo-mining literature. In the following section we consider the estimation of
$G_{\mathbf{s,s+h}}(\omega)$ when the observations are not corrupted. In
practice one uses the Fast Fourier Transform algorithm for computing the DFT's
when the time series data is equally spaced.

We may point out that other types of Nugget effects are feasible, for example
one could have a process which is temporally correlated, but spatially
uncorrelated. Such processes \ were discussed by \cite{stein2005statistical}.

\section{Estimation of the Frequency variogram under the Intrinsic
stationarity.}

Let $\left\{  Y_{t}(\mathbf{s}_{i});i=1,2,3,...,m;\text{ }%
t=1,2,....,n\right\}  $ be a sample from the spatio temporal random process
$\left\{  Y_{t}(\mathbf{s}_{i})\right\}  $. Here we consider the estimation of
FV under the assumption that the process is intrinsically stationary both
spatially and temporally. We assume that the process $\left\{  Y_{t}%
(\mathbf{s}_{i})\right\}  $ observed is not corrupted by noise.

Consider the Frequency Variogram $G_{\mathbf{s,s+h}}^{x}(\omega
)=E|J_{\mathbf{s}}^{y}(\omega)-J_{\mathbf{s}+\mathbf{h}}^{y}(\omega
)|^{2},|\omega|\leq\pi$. We noted earlier that the FV $G_{\mathbf{s,s+h}}%
^{x}(\omega)$ is \ the expected value of the periodogram of the incremental
process $X_{t}^{\mathbf{h}}(\mathbf{s})=Y_{t}(\mathbf{s})-Y_{t}(\mathbf{s}%
+\mathbf{h}),(t=1,2,...)$. The process$\{X_{t}^{\mathbf{h}}(\mathbf{s})\}$ is
spatially, temporally stationary when $\mathbf{h}$ is fixed. Therefore for
large $n$, it is well known that the periodogram is an unbiased estimator of
the second order spectral density function of the stationary process
$\{X_{t}^{\mathbf{h}}(\mathbf{s})\}$ though it is not a consistent estimator.
Therefore, our object here is to obtain a consistent estimator of the spectrum
of the incremental process $\{X_{t}^{\mathbf{h}}(\mathbf{s})\}$ for a given
$\mathbf{h}$, using the entire sample of discrete of Fourier transforms
$\left\{  J_{\mathbf{s}_{i}}(\omega_{k});i=1,2,...,m\right\}  $, for all
$\omega_{k}=\frac{2\pi k}{n},(k=0,1,....,\left[  \frac{n}{2}\right]  )$.

Let $\ g_{\mathbf{s}_{i},h}^{x}(\omega)$ be the second order spectrum of the
incremental process $\{X_{t}^{\mathbf{h}}(\mathbf{s}_{i})\}$. Since the
intrinsic process is spatially stationary $g_{\mathbf{s}_{i},\mathbf{h}}%
^{x}(\omega)$ does not depend $\mathbf{s}_{i}$. We denote such a stationary
spectrum of the intrinsic process by $g_{\mathbf{h}}^{x}(\omega)$.

Let $\Omega$ denote the set of all location $\mathbf{s}{_{1},\mathbf{s}%
_{2},...\mathbf{s}_{m}}$, and let $N(\mathbf{h})$ denote the subset of
locations, such that $N(\mathbf{h})=\{\mathbf{s}_{i};i=1,2,...,m$, $\text{such
that}$, \textbf{ }both $\mathbf{s}_{i},\mathbf{s}_{i}+\mathbf{h}\in\Omega\}$,.
$|N(\mathbf{h})|$ be the number of distinct elements in the set $N(\mathbf{h}%
)$. The estimation of stationary spectrum of a time series is well known and,
therefore, we discuss the estimation of $g_{\mathbf{h}}^{x}(\omega)$ only
briefly. For details, we refer to \cite{Priest}, \cite{Brilli}, \cite{Brock}.

Consider the estimator,
\begin{equation}
\hat{g}_{\mathbf{h}}^{x}(\omega)=\int_{-\pi}^{\pi}W_{n}(\omega-\theta)\left(
\frac{1}{|N(\mathbf{h})|}\sum_{i}I_{\mathbf{s}_{i},\mathbf{s}_{i}+\mathbf{h}%
}^{x}(\theta)d\theta\right)  , \label{estimator23}%
\end{equation}
where the sum is taken over the set $N(\mathbf{h})$, and the weight function
$W_{n}(\theta)$, which is a real valued even function of $\theta$, satisfies
the following Assumptions. For further details, see (\cite{Priest},
\cite{Brilli}).

\textbf{Assumptions:}

\begin{enumerate}
\item $W_{n}(\theta)\geq0$ for all $n$ and $\theta$,

\item $\int W_{n}(\theta)d\theta=1$, all $n$,

\item $\int W_{n}^{2}(\theta)d\theta<\infty$, all $n$,

\item For any $\varepsilon\left(  >0\right)  $, $W_{n}\left(  \theta\right)
\rightarrow0$, uniformly as $n\rightarrow\infty$, for $\left\vert
\theta\right\vert >\varepsilon$.
\end{enumerate}

\begin{theorem}
Let $g_{\mathbf{h}}^{x}(\omega)$ be the spectral density function of the
process $\{X_{t}^{\mathbf{h}}(\mathbf{s}_{i})\}$ for all $\mathbf{s}_{i}$ and
let $g_{\mathbf{s}_{i},\mathbf{s}_{j}}^{x}(\mathbf{h},\omega)$ be the cross
spectral density function of the process $\{X_{t}^{\mathbf{h}}(\mathbf{s}%
_{i})\}$ and $\{X_{t}^{\mathbf{h}}(\mathbf{s}_{j})\}$. Then we have
\begin{equation}
E(\hat{g}_{\mathbf{h}}^{x}(\omega))=g_{\mathbf{h}}^{x}(\omega)+O(\frac{\ln
n}{n}), \label{Equ_Th2_1}%
\end{equation}
and
\begin{equation}
\lim_{n\rightarrow\infty}Var(\hat{g}_{h}(\omega))=\frac{1}{|N(\mathbf{h}%
)|^{2}}\frac{2\pi}{n}\int{W_{n}^{2}}(\omega-\theta)\left[  \sum_{i,j}%
|g_{\mathbf{s}_{i},\mathbf{s}_{j}}^{x}(\mathbf{h},\theta)|^{2}\right]
d\theta. \label{Equ_Th2_2}%
\end{equation}

\end{theorem}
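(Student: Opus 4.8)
The plan is to treat $\hat{g}_{\mathbf{h}}^{x}(\omega)$ as a lag-window-smoothed average of periodograms of the incremental processes $\{X_{t}^{\mathbf{h}}(\mathbf{s}_{i})\}$ and to import the classical large-sample theory for smoothed periodogram spectral estimators, adapting it to the fact that the $|N(\mathbf{h})|$ terms being averaged come from different but jointly stationary series. The starting observation is that, since $\{X_{t}^{\mathbf{h}}(\mathbf{s}_{i})\}$ is spatially and temporally stationary for fixed $\mathbf{h}$, every auto-periodogram $I_{\mathbf{s}_{i},\mathbf{s}_{i}+\mathbf{h}}^{x}(\theta)$ shares the same expectation, namely a Fej\'er-kernel smoothing of the common spectrum $g_{\mathbf{h}}^{x}$.

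For the bias (\ref{Equ_Th2_1}), by linearity of expectation and spatial stationarity the average over locations disappears, leaving
\[
E[\hat{g}_{\mathbf{h}}^{x}(\omega)]=\int_{-\pi}^{\pi}W_{n}(\omega-\theta)\,E[I_{\mathbf{s}_{i},\mathbf{s}_{i}+\mathbf{h}}^{x}(\theta)]\,d\theta .
\]
I would then invoke the standard bound $E[I_{\mathbf{s}_{i},\mathbf{s}_{i}+\mathbf{h}}^{x}(\theta)]=g_{\mathbf{h}}^{x}(\theta)+O(\ln n/n)$, valid uniformly in $\theta$ under absolute summability of the autocovariances of $\{X_{t}^{\mathbf{h}}(\mathbf{s})\}$ (see \cite{Brilli}, \cite{Priest}). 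Substituting and using Assumption 2 ($\int W_{n}=1$) keeps the $O(\ln n/n)$ term of that order after integration, while the leading piece $\int W_{n}(\omega-\theta)g_{\mathbf{h}}^{x}(\theta)\,d\theta$ tends to $g_{\mathbf{h}}^{x}(\omega)$ because Assumptions 1, 2 and 4 make $W_{n}$ an approximate identity. Combining these gives (\ref{Equ_Th2_1}).

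For the variance (\ref{Equ_Th2_2}) I would expand
\[
\mathrm{Var}[\hat{g}_{\mathbf{h}}^{x}(\omega)]=\frac{1}{|N(\mathbf{h})|^{2}}\int\!\!\int W_{n}(\omega-\theta)W_{n}(\omega-\phi)\sum_{i,j}\mathrm{Cov}\bigl[I_{\mathbf{s}_{i},\mathbf{s}_{i}+\mathbf{h}}^{x}(\theta),I_{\mathbf{s}_{j},\mathbf{s}_{j}+\mathbf{h}}^{x}(\phi)\bigr]\,d\theta\,d\phi,
\]
and then use the asymptotic joint complex-normality of the DFTs emphasised in Section 4 together with the Isserlis product-moment identity. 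For zero-mean complex Gaussian amplitudes $\mathrm{Cov}(|A|^{2},|B|^{2})=|E(A\bar{B})|^{2}+|E(AB)|^{2}$, so the periodogram covariance is governed by the cross-spectrum $g_{\mathbf{s}_{i},\mathbf{s}_{j}}^{x}(\mathbf{h},\cdot)$ multiplied by a Fej\'er-type kernel $\Phi_{n}(\theta-\phi)$ (plus a conjugate term concentrated near $\theta+\phi=0$ and a fourth-order cumulant term). Since $\int\Phi_{n}\approx 2\pi/n$ and $\Phi_{n}$ concentrates at the origin, $\Phi_{n}(\theta-\phi)$ acts like $(2\pi/n)\,\delta(\theta-\phi)$ against the smooth weights, collapsing the double integral to $\tfrac{2\pi}{n}\int W_{n}^{2}(\omega-\theta)\sum_{i,j}|g_{\mathbf{s}_{i},\mathbf{s}_{j}}^{x}(\mathbf{h},\theta)|^{2}\,d\theta$, which is (\ref{Equ_Th2_2}).

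The main obstacle is this variance step, in two respects. First, for a genuinely non-Gaussian intrinsic process one must show that the fourth-order cumulant contribution to $\mathrm{Cov}[I_{\mathbf{s}_{i},\mathbf{s}_{i}+\mathbf{h}}^{x},I_{\mathbf{s}_{j},\mathbf{s}_{j}+\mathbf{h}}^{x}]$ is of smaller order and drops out in the limit, which requires a mixing or cumulant-summability hypothesis on $\{X_{t}^{\mathbf{h}}(\mathbf{s})\}$. Second, one must verify that the conjugate Fej\'er term, concentrated near $\theta=-\phi$, contributes negligibly for $\omega\notin\{0,\pi\}$, and that Assumption 3 ($\int W_{n}^{2}<\infty$) keeps the surviving single integral finite. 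Once the off-diagonal contributions are shown to vanish, the sum $\sum_{i,j}$ over $N(\mathbf{h})$ passes through the kernel collapse unchanged and the stated limit follows.
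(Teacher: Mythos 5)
Your proposal follows essentially the same route as the paper: the bias step (linearity, the $O(\ln n/n)$ periodogram expectation, and the approximate-identity property of $W_n$) is identical, and your variance step---complex-Gaussian Isserlis identity, a concentrating Fej\'er-type kernel acting as $(2\pi/n)\delta(\theta-\phi)$, plus a conjugate term near $\theta+\phi=0$---is exactly the paper's heuristic computation, which uses a discrete Fourier-frequency approximation and Brillinger's Dirac-comb formula $\eta(\theta_p-\theta_{p'})$, $\eta(\theta_p+\theta_{p'})$ for the periodogram covariances. The fourth-cumulant obstacle you flag is dealt with in the paper simply by assuming the intrinsic process is Gaussian (stated as a simplifying, non-essential assumption), so your plan is correct and aligned with the published argument.
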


\begin{proof}
Take expectations both sides of (\ref{estimator23}),%
\[
E\left(  \hat{g}_{h}^{x}(\omega)\right)  =\int W_{n}(\omega-\theta)(\frac
{1}{|N(\mathbf{h})|})\sum_{i}E(I_{\mathbf{s}_{i},\mathbf{s}_{i}+\mathbf{h}%
}^{x}(\theta))d\theta,
\]
and we have
\[
E(I_{\mathbf{s}_{i},\mathbf{s}_{i}+\mathbf{h}}^{x}(\theta))=g_{\mathbf{h}}%
^{x}(\theta)+O\left(  \frac{\ln n}{n}\right)  ,
\]
and, \text{therefore, we obtain}%
\[
E\left(  \hat{g}_{\mathbf{h}}^{x}(\omega)\right)  =g_{\mathbf{h}}^{x}%
(\omega)+O\left(  \frac{\ln n}{n}\right)  ,
\]
in view of the Assumption 2, and the fact that $W_{n}(\theta)$ is approaching
the Dirac-Delta function concentrating its mass at $\theta=0$. Therefore,
$\hat{g}_{\mathbf{h}}^{x}(\omega)$ is asymptotically an unbiased estimator of
$g_{\mathbf{h}}^{x}(\omega)$. As we have noted earlier, estimating the
frequency variogram is equivalent to (for large $n$) estimating the spectral
density $g_{\mathbf{h}}^{x}(\omega)$ of the intrinsic process $\{X_{t}%
^{\mathbf{h}}(\mathbf{s}_{i})\}$. To obtain an expression for the variance, we
consider a discrete approximation of $\widehat{g^{x}}_{\mathbf{h}}(\omega)$.
Our derivation here is heuristic, and to obtain an expression for the
covariance we assume the intrinsic process is Gaussian, even though this
assumption is not essential for proving normality or consistency (see
\cite{Brilli}, and \cite{giraitis2012large}). Consider the discrete
approximation of (8), and take variance both sides, we get
\begin{align*}
Var(\hat{g}_{\mathbf{h}}^{x}(\omega)=\frac{1}{{|N(\mathbf{h})|}^{2}}{\left(
\frac{2\pi}{n}\right)  }^{2}\sum_{P}  &  \sum_{P^{\prime}}W_{n}(\omega
-\theta_{P})W_{n}(\omega-\theta_{P^{\prime}})\\
&  \times Cov\left(  \sum_{i}I_{\mathbf{s}_{i},\mathbf{s}_{i}+\mathbf{h}}%
^{x}(\theta_{P}),\sum_{j}I_{\mathbf{s}_{j},\mathbf{s}_{j+\mathbf{h}}}%
^{x}(\theta_{P^{\prime}})\right)  ,
\end{align*}
and we have%
\begin{align*}
&  Cov\left(  \sum_{i}I_{\mathbf{s}_{i},\mathbf{s}_{i}+\mathbf{h}}^{x}%
(\theta_{P}),\sum_{j}I_{\mathbf{s}_{j},\mathbf{s}_{j}+\mathbf{h}}^{x}%
(\theta_{P^{\prime}})\right) \\
&  =\eta(\theta_{p}-\theta_{p^{\prime}})\sum_{i}\sum_{j}{|g_{\mathbf{s}%
_{i},\mathbf{s}_{j}}^{x}(\mathbf{h},\theta_{p})|}^{2}+\eta(\theta_{p}%
+\theta_{p^{\prime}})\sum_{i}\sum_{j}{|g_{\mathbf{s}_{i},\mathbf{s}_{j}}%
^{x}(\mathbf{h},\theta_{p})|}^{2},
\end{align*}
where $\eta(\theta)=\sum_{-\infty}^{\infty}\delta(\theta-2\pi j)$ is a Dirac
comb (\cite{Brilli} Corollary 7.22). To obtain the above expression we used
the results already well known concerning the covariance between two
periodogram ordinates (see \cite{Brilli}).After substitution of this
expression for the covariance and after some simplification, we obtain
\[
\lim_{n\rightarrow\infty}Var(\hat{g}_{\mathbf{h}}(\omega))=\frac
{1}{|N(\mathbf{h})|^{2}}\frac{2\pi}{n}\int W_{n}^{2}(\omega-\theta)\left[
\sum\sum(g_{\mathbf{s}_{i},\mathbf{s}_{j}}^{x}(\mathbf{h},\theta
))^{2})\right]  d\theta.
\]

\end{proof}

The above \ result shows that $\hat{g}_{h}^{x}(\omega)$ is a mean square
consistent estimator of $g_{h}^{x}(\omega)$ \ and as we mentioned earlier that
$g_{h}^{x}(\omega)$ \ is asymptotically equivalent to the Frequency Variogram.

\begin{remark}
In the derivation of the above results, we have only assumed that the
intrinsic process is Gaussian. The assumption of Gaussianity is made only to
obtain a simple expression for the variance. The result that the estimator
$\widehat{g}_{\mathbf{h}}^{x}(\omega)$ is a consistent estimator is still
valid under non Gaussianity assumption (see \cite{Brilli}).
\end{remark}

\begin{remark}
It is well known that the usual Matheron estimator for the Variogram
$\gamma(\mathbf{h,u)}$ may not be stable if the data are sparse or irregularly
shaped (see \cite{Schabenberger2005},p 153). In such situations, it is usual
to consider all pairs $(\mathbf{s}_{i},\mathbf{s}_{j})$ such that
$\ \mathbf{s}_{i}-\mathbf{s}_{j}=\mathbf{h\pm\bigtriangleup,}$where
$\mathbf{\bigtriangleup}$ is tolerance (see \cite{Hucre}). The choice of
$\mathbf{\bigtriangleup}$ is arbitrary and the derivation of the sampling
properties become complicated.
\end{remark}

We can show by following similar lines as above, that as $n\rightarrow\infty$,
(see \cite{Priest})%
\[
Cov\left(  \hat{g}_{\mathbf{h}}^{x}(\omega_{1}),\hat{g}_{\mathbf{h}}%
^{x}(\omega_{2})\right)  =0\quad\text{for}\quad\omega_{1}+\omega_{2}\neq0.
\]
The asymptotic normality of $\hat{g}_{\mathbf{h}}^{x}(\omega)$ can be shown
using the results of \cite{hannan}, \cite{Tani}, \cite{Deo}.

\section{Complex Stochastic Partial Differential Equation(CSPDE) for the
intrinsic process and the spectrum for the FV.}

In a recent paper, \cite{2013arXivSRT_GyT} defined a complex stochastic
partial differential equation for the spatio temporal process and obtained an
analytic expression for the spectrum of the spatio-temporal process. The
parametric spectrum thus obtained from the assumed model is non-separable.A
spatio-temporal random process is said to be separable if its second order
space-time spectrum can be written as a product of two positive semi-definte
functions which are ,in fact , \ space spectrum which is a function of wave
numbers $\mathbf{\lambda}$ , and the other part corresponds to temporal
spectrum corresponding to the temporal frequency $\omega$. As we mentioned
earlier, stationarity assumption may not be realistic always, and therefore, a
weaker assumption that the process is intrinsically stationary is made. Here
our object is to define a model for such an intrinsic process, and obtain an
analytic parametric expression for the spectrum for the intrinsic process. In
a later section, we consider the estimation of the parameters of the spectral
function. We may note that \cite{Yag} and \cite{Huang} and others have
obtained spectra for the variogram in the case of spatial process. \cite{Yun},
\cite{Huang}, have considered non-parametric estimation of the variogram.

Consider the incremental random process ${X_{t}^{\mathbf{h}}}(s)=Y_{t}%
(\mathbf{s})-Y_{t}(\mathbf{s}+\mathbf{h})$, $\mathbf{s}\in\mathbb{R}^{d}%
,t\in\mathbb{Z}$. For a fixed $\mathbf{h}$, the incremental process is a
function of the spatial location $\mathbf{s}\in\mathbb{R}^{d}$, and time
$t\in\mathbb{Z}$

We consider the process $\{{X_{t}^{\mathbf{h}}}(s)\}$ which is assumed to be a
zero mean, stationary process in space and time. Define the DFT of the time
series $\{{X_{t}^{\mathbf{h}}}(s)\}$,
\[
J_{\mathbf{s,s+h}}^{(x)}(\omega_{k})=\frac{1}{\sqrt{2\pi n}}\sum_{t=1}%
^{n}{X_{t}^{\mathbf{h}}}(s)e^{it\omega_{k}},
\]
and the \ DFT $J_{\mathbf{s(L),s(L)+h}}^{(x)}(\omega_{k})$ of the time
series$\{{X_{t}^{\mathbf{h}}}(\mathbf{s(L)})\}$ where, for each $t,{X_{t}%
^{\mathbf{h}}}\mathbf{(s(L))}=Y_{t}(\mathbf{s+L})-Y_{t}(\mathbf{s}%
+\mathbf{L+h})$ at the frequencies
\[
\omega_{k}=\frac{2\pi k}{n},\text{ }(k=0,1,2,...,\left[  \frac{n}{2}\right]
).
\]

Define the covariance between two distinct Fourier Transforms
$J_{\mathbf{s,s+h}}^{(x)}(\omega)$ and $\ J_{\mathbf{s(L),s(L)+h}}%
^{(x)}(\omega),$
\[
g_{\mathbf{s,s+L}}^{(\mathbf{h})}(\omega)=Cov(J_{\mathbf{s,s+h}}^{(x)}%
(\omega)\text{,}\ J_{\mathbf{s(L),s(L)+h}}^{(x)}(\omega)),
\]
where $J_{\mathbf{s,s+h}}^{(x)}(\omega)$,$\ J_{\mathbf{s(L),s(L)+h}}%
^{(x)}(\omega)$, respectively, are discrete Fourier Transforms of the
incremental processes
\[
{X_{t}^{\mathbf{h}}}(s)=Y_{t}(\mathbf{s})-Y_{t}(\mathbf{s+h}),\quad
\text{and}\quad{X_{t}^{\mathbf{h}}}(\mathbf{s}(\mathbf{L}))=Y_{t}%
(\mathbf{s+L})-Y_{t}(\mathbf{s+L+h}),
\]
for $t=1,...,n,\quad\mathbf{s}=\mathbf{s}_{1},...,\mathbf{s}_{m}$ and
$\mathbf{L}\in\mathbf{R}^{d}$. We note that in computing the above, we fix
$\mathbf{h}$ and consider$\{{X_{t}^{\mathbf{h}}}(\mathbf{s})\}$ as one
spatio-temporal series.\newline Since the process $\{{X_{t}^{\mathbf{h}}%
}(\mathbf{s})\}$ is a zero mean second order spatially, temporally stationary,
it has the spectral representation.
\[
{X_{t}^{\mathbf{h}}}(\mathbf{s})=\int_{\mathbf{R}^{d}}\int_{-\pi}^{\pi
}e^{i(\mathbf{s}\cdot\boldsymbol{\lambda}+t\omega)}d{\xi_{X}^{(\mathbf{h})}%
}(\boldsymbol{\lambda},\omega).
\]
where $d{\xi_{X}^{(\mathbf{h})}}(\boldsymbol{\lambda},\omega)$ is a zero mean
complex random process with orthogonal increments with%
\begin{align*}
E[d{\xi_{X}^{(\mathbf{h})}}(\boldsymbol{\lambda},\omega)]  &  =0,\\
E|d{\xi_{X}^{(\mathbf{h})}}(\boldsymbol{\lambda},\omega)|^{2}  &  =d{F_{X}%
}^{(\mathbf{h})}(\boldsymbol{\lambda},\omega)={f_{X}^{\left(  \ \mathbf{h}%
\right)  \ }}(\boldsymbol{\lambda},\omega)d\boldsymbol{\lambda}d\omega.
\end{align*}
We define ${f_{X}}^{(\mathbf{h})}(\boldsymbol{\lambda},\omega),$ as the
spectral density function of the stationary intrinsic process $\{{X_{t}%
^{\mathbf{h}}}(\mathbf{s})\}.$ We have the following spectral representation
for the DFT of the intrinsic process.

\begin{proposition}
Let $J_{\mathbf{s,s+h}}^{(x)}(\omega)$ be the DFT of the stationary time
series $\{{X_{t}^{\mathbf{h}}}(\mathbf{s})\}$. Then,%

\[
J_{\mathbf{s,s+h}}^{(x)}(\omega)=\sqrt{\frac{n}{2\pi}}\int e^{i\mathbf{s\cdot
}\boldsymbol{\lambda}}d{\xi_{X}}^{(\mathbf{h})}(\boldsymbol{\lambda}%
,\omega)+o_{p}(1).
\]

\end{proposition}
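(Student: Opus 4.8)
The plan is to substitute the Cram\'er spectral representation of $\{X_t^{\mathbf{h}}(\mathbf{s})\}$ directly into the definition of the discrete Fourier transform and then isolate a Dirichlet-type kernel in the temporal frequency. Writing $X_t^{\mathbf{h}}(\mathbf{s})=\int_{\mathbb{R}^d}\int_{-\pi}^{\pi}e^{i(\mathbf{s}\cdot\boldsymbol{\lambda}+t\nu)}\,d\xi_X^{(\mathbf{h})}(\boldsymbol{\lambda},\nu)$ and inserting this into the DFT (with the sign convention of (\ref{eq4.3})), I would interchange the finite sum over $t$ with the stochastic integral. Since the sum is finite the interchange is justified by linearity of the integral with respect to the orthogonal-increment measure, and it produces
\[
J_{\mathbf{s,s+h}}^{(x)}(\omega)=\int_{\mathbb{R}^d}\int_{-\pi}^{\pi}e^{i\mathbf{s}\cdot\boldsymbol{\lambda}}\,\Phi_n(\nu-\omega)\,d\xi_X^{(\mathbf{h})}(\boldsymbol{\lambda},\nu),\qquad \Phi_n(\alpha)=\frac{1}{\sqrt{2\pi n}}\sum_{t=1}^{n}e^{it\alpha}.
\]
Everything then reduces to understanding how the kernel $\Phi_n(\nu-\omega)$ acts on the spectral increments as $n\to\infty$.

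The second step is to analyse $\Phi_n$. A closed-form summation gives $|\Phi_n(\alpha)|^2=(2\pi n)^{-1}\sin^2(n\alpha/2)/\sin^2(\alpha/2)$, which is $(2\pi)^{-1}$ times the Fej\'er kernel and hence satisfies $\int_{-\pi}^{\pi}|\Phi_n(\alpha)|^2\,d\alpha=1$; it is an approximate identity whose mass concentrates in an $O(1/n)$ band about $\alpha=0$. The crucial numerical coincidence is that its peak value is $\Phi_n(0)=\sqrt{n/2\pi}$, exactly the prefactor appearing in the statement. Thus $\Phi_n(\nu-\omega)$ concentrates the stochastic integral about $\nu=\omega$, and the main term is obtained by replacing $e^{i\mathbf{s}\cdot\boldsymbol{\lambda}}\,d\xi_X^{(\mathbf{h})}(\boldsymbol{\lambda},\nu)$ on the concentration band by its value at $\nu=\omega$, yielding $\sqrt{n/2\pi}\int_{\mathbb{R}^d}e^{i\mathbf{s}\cdot\boldsymbol{\lambda}}\,d\xi_X^{(\mathbf{h})}(\boldsymbol{\lambda},\omega)$, the right-hand side understood as the rescaled spectral increment over that Fourier bin.

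The hard part will be controlling the remainder and making the ``replacement on the band'' rigorous: for an absolutely continuous spectrum the increment $d\xi_X^{(\mathbf{h})}(\cdot,\omega)$ at a single frequency is formally infinitesimal, so the honest content of the proposition is that $J_{\mathbf{s,s+h}}^{(x)}(\omega)$ minus the main term tends to zero. Because all quantities are mean zero, I would estimate the second moment directly; by orthogonality of the increments,
\[
E\left| J_{\mathbf{s,s+h}}^{(x)}(\omega)-(\text{main term})\right|^2=\int_{\mathbb{R}^d}\int_{-\pi}^{\pi}\left|\Phi_n(\nu-\omega)-K_n(\nu)\right|^2 f_X^{(\mathbf{h})}(\boldsymbol{\lambda},\nu)\,d\boldsymbol{\lambda}\,d\nu,
\]
where $K_n$ is the kernel defining the main term. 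Invoking boundedness and local regularity of the spectral density $f_X^{(\mathbf{h})}$ together with the approximate-identity property of $|\Phi_n|^2$ (whose tails decay like the squared Dirichlet kernel), this integral tends to $0$, and Markov's inequality upgrades the $L^2$ bound to the claimed $o_p(1)$. The delicate points are the uniform control of the Dirichlet-kernel tails and the precise smoothness hypotheses on $f_X^{(\mathbf{h})}$; the required kernel estimates are classical, and I would cite \cite{Brilli} and \cite{giraitis2012large}, this proposition being the spatio-temporal analogue of the standard relation between the finite Fourier transform and the Cram\'er representation of a stationary sequence.
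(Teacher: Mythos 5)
Your opening moves are exactly the route the paper intends: substitute the Cram\'er representation into the DFT, exchange the finite sum with the stochastic integral, and identify the Dirichlet-type kernel $\Phi_n$ with peak $\sqrt{n/2\pi}$ (the paper's own ``proof'' is only a citation to Proposition 2 of \cite{2013arXivSRT_GyT}, and for the analogous appendix statement it invokes ``substitution and the properties of the Dirac Delta function''). The genuine gap is at the step you yourself flag as the hard part, and it is not a gap that classical kernel estimates or smoothness of $f_X^{(\mathbf{h})}$ can close: the $L^2$ convergence you assert is false. Under the only reading of the main term compatible with the $\sqrt{n/2\pi}$ scaling and an absolutely continuous spectrum, namely the kernel $K_n(\nu)=\sqrt{n/2\pi}\,\mathbf{1}_{B_n}(\nu)$ with $B_n=[\omega-\pi/n,\omega+\pi/n]$, one has $\int_{-\pi}^{\pi}|\Phi_n(\nu-\omega)|^2d\nu=1$ and $\int_{-\pi}^{\pi}K_n(\nu)^2d\nu=1$, but the cross term does not tend to $1$:
\[
\int_{-\pi}^{\pi}\Phi_n(\nu-\omega)K_n(\nu)\,d\nu
=\sqrt{\tfrac{n}{2\pi}}\,\frac{1}{\sqrt{2\pi n}}\sum_{t=1}^{n}\frac{2\sin(t\pi/n)}{t}
\;\longrightarrow\;\frac{1}{\pi}\int_{0}^{\pi}\frac{\sin u}{u}\,du=\frac{\mathrm{Si}(\pi)}{\pi}\approx 0.59,
\]
so that $\int_{-\pi}^{\pi}\bigl|\Phi_n(\nu-\omega)-K_n(\nu)\bigr|^2d\nu\to 2-2\,\mathrm{Si}(\pi)/\pi\approx 0.82$. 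Since this kernel difference still concentrates its mass near $\nu=\omega$, your second-moment identity gives, for any spectral density continuous in $\nu$ and integrable in $\boldsymbol{\lambda}$,
\[
E\Bigl|J^{(x)}_{\mathbf{s},\mathbf{s}+\mathbf{h}}(\omega)-\text{(main term)}\Bigr|^2
\;\longrightarrow\;\Bigl(2-\tfrac{2\,\mathrm{Si}(\pi)}{\pi}\Bigr)\,g_0^{(\mathbf{h})}(\omega)\;>\;0 .
\]
Two structural features cause this and are immune to regularity assumptions: the Dirichlet tails of $\Phi_n$ carry a fixed positive fraction of its energy outside the central bin, and inside the bin $\Phi_n$ carries the rotating phase $e^{i(n+1)(\nu-\omega)/2}$ which the raw increment does not. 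Widening the band only makes matters worse, because the $\sqrt{n/2\pi}$ normalization then mismatches the increment's variance and the correlation tends to $0$.

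The conclusion is that the proposition cannot be proved --- indeed cannot even be correctly formulated --- as an $L^2$ or in-probability identity when the temporal spectrum is absolutely continuous; it is a formal shorthand, and both this paper and \cite{2013arXivSRT_GyT} treat it as such. Its honest, provable content, and the only thing the paper uses downstream (the covariance formulas (\ref{A4}) and (\ref{A5}) of Theorem 2), is the second-moment statement that your approximate-identity observation does deliver:
\[
Cov\bigl(J^{(x)}_{\mathbf{s},\mathbf{s}+\mathbf{h}}(\omega),\,J^{(x)}_{\mathbf{s}+\mathbf{L},\mathbf{s}+\mathbf{L}+\mathbf{h}}(\omega)\bigr)
=\int_{\mathbb{R}^d}\int_{-\pi}^{\pi}|\Phi_n(\nu-\omega)|^2\,e^{-i\mathbf{L}\cdot\boldsymbol{\lambda}}f_X^{(\mathbf{h})}(\boldsymbol{\lambda},\nu)\,d\nu\,d\boldsymbol{\lambda}
\;\longrightarrow\;\int_{\mathbb{R}^d}e^{-i\mathbf{L}\cdot\boldsymbol{\lambda}}f_X^{(\mathbf{h})}(\boldsymbol{\lambda},\omega)\,d\boldsymbol{\lambda},
\]
i.e.\ the DFTs at a fixed frequency behave \emph{to second order} as if they had the claimed representation. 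If you redirect your argument to prove this covariance convergence (your kernel bounds plus continuity of $f_X^{(\mathbf{h})}$ in $\nu$ suffice), you obtain everything the proposition is actually used for, while the literal $o_p(1)$ claim should be identified as a formal device rather than a theorem.
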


\begin{proof}
The proof is similar to the proof given in Proposition 2 of
\cite{2013arXivSRT_GyT} and hence the details are omitted.
\end{proof}

In the following we denote the $d$ coordinates of the location $\mathbf{s}$ by
$(s_{1},s_{2}...s_{d}).$

\begin{theorem}
Let $\{J_{\mathbf{s}_{i}\mathbf{,s}_{i}\mathbf{+h}}^{(x)}(\omega
);i=1,2,...,m\}$ be the discrete Fourier transforms of the incremental process
$\{{X_{t}^{\mathbf{h}}}(\mathbf{s}_{i})\}$. Let
\begin{equation}
\left[  \sum_{i=1}^{d}\frac{\partial^{2}}{\partial s{_{i}}^{2}}-|P_{\mathbf{h}%
}(\omega,\boldsymbol{\psi})|^{2}\right]  ^{\nu}J_{\mathbf{s,s+h}}^{(x)}%
(\omega)={J}_{\eta_{\mathbf{s}}}^{(\mathbf{h})}(\omega),\quad|\omega|\leq\pi,
\label{A2}%
\end{equation}
where $\nu>0$,and ${J}_{\eta_{\mathbf{s}}}^{(\mathbf{h})}(\omega)$ is the DFT
of the space-time white noise process $\left\{  {\eta_{t}}(\mathbf{s}%
)\right\}  $ and $P_{\mathbf{h}}(\omega,\boldsymbol{\psi})$ is a polynomial
\ in $\omega$ and it is a function of some parameter vector $\boldsymbol{\psi
}$. Then the second order space-time spectrum \ of the intrinsic process
$\{{X_{t}^{\mathbf{h}}}(\mathbf{s})\}$ is given by
\begin{equation}
{f_{X}^{(\mathbf{h})}}(\boldsymbol{\lambda},\omega)=\frac{{\sigma_{\eta}}^{2}%
}{(2\pi)^{d+1}}\frac{1}{\left(  \sum_{i=1}^{d}{\lambda}_{i}^{2}+|P_{\mathbf{h}%
}(\omega,\boldsymbol{\psi})|^{2}\right)  ^{2\nu}}, \label{A3}%
\end{equation}
and the covariance between the periodograms (which is a spectrum dependent on
spatial distance $\mathbf{L}$, and the temporal frequency $\omega$) is given
by
\begin{align}
{g}_{\mathbf{s,s+L}}^{(\mathbf{h})}(\omega)  &  =Cov(J_{\mathbf{s,s+h}}%
^{(x)}(\omega),\ J_{\mathbf{s(L),s(L)+h}}^{(x)}(\omega))\nonumber\\
&  =\frac{{\sigma_{\eta}}^{2}}{(2\pi)^{d}2^{2\nu-1}\Gamma(2\nu)}\left(
\frac{||\mathbf{L}||}{|P_{\mathbf{h}}(\omega,\boldsymbol{\psi})|}\right)
^{2\nu-\frac{d}{2}}K_{2\nu-\frac{d}{2}}\left(  \left\Vert \mathbf{L}%
\right\Vert |P_{\mathbf{h}}(\omega,\boldsymbol{\psi})|\right)  . \label{A4}%
\end{align}
where $\mathbf{s(L)=s+L,}$and $K_{\nu}(x)$ is the modified Bessel function of
the second kind of order $\nu$. We note that in view of spatial stationarity,
the right hand side expression does not depend on $\mathbf{s}$, and depends
only on the Euclidean spatial distances $||\mathbf{L}||$ and $||\mathbf{h}||$.
Further, as $||\mathbf{L}||\rightarrow0$, the \ temporal spectrum of the
intrinsic process$\{{X_{t}^{\mathbf{h}}}(s)\}$ is given by
\begin{equation}
{g_{0}^{(\mathbf{h})}}(\omega)=Var(J_{\mathbf{s,s+h}}^{(x)}(\omega
))=\frac{{\sigma_{\eta}}^{2}}{(2\pi)^{\frac{d}{2}}2^{\frac{d}{2}}\left(
|P_{\mathbf{h}}(\omega,\boldsymbol{\psi})|^{2}\right)  ^{2v-\frac{d}{2}}}%
\frac{\Gamma(2v-\frac{d}{2})}{\Gamma\left(  2v\right)  }. \label{A5}%
\end{equation}

\end{theorem}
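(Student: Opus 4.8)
The plan is to push the spectral representation of the preceding Proposition through the pseudo-differential operator, read off the spectral density, and then recognise the resulting spatial Fourier inversion as the Mat\'ern (modified Bessel) kernel. For (\ref{A3}) I would substitute $J_{\mathbf{s,s+h}}^{(x)}(\omega)=\sqrt{n/2\pi}\int e^{i\mathbf{s}\cdot\boldsymbol{\lambda}}\,d\xi_X^{(\mathbf{h})}(\boldsymbol{\lambda},\omega)$ into the left-hand side of (\ref{A2}) and apply the operator termwise under the integral. Each harmonic $e^{i\mathbf{s}\cdot\boldsymbol{\lambda}}$ is an eigenfunction of $\sum_i\partial^2/\partial s_i^2$ with eigenvalue $-\sum_i\lambda_i^2$, so $[\sum_i\partial^2/\partial s_i^2-|P_{\mathbf{h}}|^2]^{\nu}$ acts as multiplication by the symbol $(-\sum_i\lambda_i^2-|P_{\mathbf{h}}|^2)^{\nu}$; this is precisely how the fractional ($\nu$ non-integer) operator is defined via its Fourier symbol. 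Equating spectral increments on both sides of (\ref{A2}) and taking expected modulus-squared, the factor $(-1)^{\nu}$ disappears, and orthogonality of increments together with the flat white-noise spectrum $E|d\xi_\eta|^2=\{\sigma_\eta^2/(2\pi)^{d+1}\}\,d\boldsymbol{\lambda}\,d\omega$ yields $E|d\xi_X^{(\mathbf{h})}|^2=\{\sigma_\eta^2/(2\pi)^{d+1}\}(\sum_i\lambda_i^2+|P_{\mathbf{h}}|^2)^{-2\nu}\,d\boldsymbol{\lambda}\,d\omega$, which is (\ref{A3}).

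Next I would obtain (\ref{A4}). Writing the two DFTs through their spectral representations and using orthogonality of the increments of $\xi_X^{(\mathbf{h})}$, the cross-covariance $g_{\mathbf{s,s+L}}^{(\mathbf{h})}(\omega)$ collapses to the inverse spatial Fourier transform of $f_X^{(\mathbf{h})}(\cdot,\omega)$ evaluated at the spatial lag $\mathbf{L}$ (the $\sqrt{n/2\pi}$ factors and the $e^{i\mathbf{s}\cdot\boldsymbol{\lambda}}$ phases cancel, leaving only $e^{-i\mathbf{L}\cdot\boldsymbol{\lambda}}$). The computational heart is then the classical $d$-dimensional identity, valid for $\alpha>d/2$,
\[
\int_{\mathbb{R}^d}\frac{e^{-i\mathbf{L}\cdot\boldsymbol{\lambda}}}{(\|\boldsymbol{\lambda}\|^2+a^2)^{\alpha}}\,d\boldsymbol{\lambda}=\frac{(2\pi)^{d/2}}{2^{\alpha-1}\Gamma(\alpha)}\left(\frac{\|\mathbf{L}\|}{a}\right)^{\alpha-d/2}K_{\alpha-d/2}\!\left(a\|\mathbf{L}\|\right),
\]
which one verifies by passing to polar coordinates followed by a Beta-function reduction (or by quoting the Mat\'ern transform pair). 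Taking $\alpha=2\nu$ and $a=|P_{\mathbf{h}}(\omega,\boldsymbol{\psi})|$ and carrying the constant $\sigma_\eta^2/(2\pi)^{d+1}$ through produces the modified Bessel expression (\ref{A4}).

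Finally, (\ref{A5}) follows by letting $\|\mathbf{L}\|\to0$ in (\ref{A4}) and invoking the small-argument asymptotic $K_{\beta}(z)\sim\tfrac12\Gamma(\beta)(z/2)^{-\beta}$ with $\beta=2\nu-\tfrac d2>0$; the factors $\|\mathbf{L}\|^{2\nu-d/2}$ cancel and the surviving constants collapse to the stated variance $g_0^{(\mathbf{h})}(\omega)=\mathrm{Var}(J_{\mathbf{s,s+h}}^{(x)}(\omega))$.

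I expect the obstacles to be bookkeeping rather than conceptual. The condition $2\nu>d/2$ is needed both for integrability of $f_X^{(\mathbf{h})}$ and for the Bessel asymptotic, and should be stated as a standing hypothesis. One must also interpret the pseudo-differential operator through its Fourier symbol so that applying it termwise under the stochastic integral is legitimate, and keep in mind that the non-integer power $(-1)^{\nu}$ is only meaningful after taking moduli. I further note that the powers of $2\pi$ in (\ref{A4}) and (\ref{A5}) as written are not mutually consistent under the $\|\mathbf{L}\|\to0$ limit (the limit of (\ref{A4}) carries $(2\pi)^{d}$, not $(2\pi)^{d/2}$), which points to a typographical slip in the normalising constant; the $\Gamma$-ratio and the powers of $|P_{\mathbf{h}}|$ and of $2$ all match exactly.
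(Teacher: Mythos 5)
Your proof is correct and is essentially the paper's own: the paper omits the argument, deferring to Theorem 1 of \cite{2013arXivSRT_GyT}, which proceeds exactly as you do --- the operator in (\ref{A2}) acts on the spectral representation of $J^{(x)}_{\mathbf{s,s+h}}(\omega)$ as multiplication by its Fourier symbol, yielding (\ref{A3}); the covariance (\ref{A4}) is the spatial inverse Fourier transform of (\ref{A3}), evaluated by the classical Mat\'ern--Bessel transform pair; and (\ref{A5}) follows by letting $\Vert\mathbf{L}\Vert\rightarrow0$ with the small-argument asymptotics of $K_{\beta}$. Your two side observations are also well founded and sharpen the statement: the condition $2\nu>d/2$ is genuinely required (the stated $\nu>0$ alone does not give integrability of (\ref{A3}) or a finite limit in (\ref{A5})), and the normalising constants printed in (\ref{A4}) and (\ref{A5}) are indeed mutually inconsistent in their powers of $2\pi$, since the $\Vert\mathbf{L}\Vert\rightarrow0$ limit of (\ref{A4}) retains $(2\pi)^{d}$ while (\ref{A5}) shows $(2\pi)^{d/2}$, the $\Gamma$-ratio and the powers of $2$ and of $|P_{\mathbf{h}}(\omega,\boldsymbol{\psi})|$ matching exactly.
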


\begin{proof}
The proof is similar to the proof of Theorem 1 of the paper by
\cite{2013arXivSRT_GyT} and hence omitted.
\end{proof}

We note \ from the expression (12) for the space-time spectrum corresponding
to the process satisfying the model (11) , that it corresponds to a
non-separable process ,defined earlier. We also note further that as pointed
out by one reviewer, that the assumption that the random process$\ \left\{
{\eta_{t}}(\mathbf{s})\right\}  $ \ is a white noise process in spatial
coordinate $\mathbf{s}$ is a fiction , but still this assumption made in the
literature. More over both covariance function, and the variance given above
depend on $\mathbf{h}$ since the polynomial $P_{\mathbf{h}}(\omega
,\boldsymbol{\psi})$ is related to the second order spectral density function
of the intrinsic process ${X_{t}^{\mathbf{h}}}(\mathbf{s})$. We note that
${g_{0}^{(\mathbf{h})}}(\omega)$ depends on some parameters, say, $\psi.$We
denote this function by ${g_{0}^{(\mathbf{h})}}(\omega,\mathbf{\psi}).$

\begin{proposition}
Let $d=2,\quad\nu=1$ and assume $\mathbf{h}$ is fixed. Then
\begin{equation}
{g_{0}^{(\mathbf{h})}}(\omega,\mathbf{\psi})=\frac{{\sigma_{\eta}}^{2}}{4\pi
}|P_{\mathbf{h}}(\omega,\boldsymbol{\psi})|^{-2}. \label{A6}%
\end{equation}

\end{proposition}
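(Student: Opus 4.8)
The plan is to obtain this identity as an immediate specialization of the general temporal--spectrum formula (\ref{A5}) of the preceding Theorem, which already gives $g_{0}^{(\mathbf{h})}(\omega)$ for arbitrary spatial dimension $d$ and smoothness index $\nu$. Since $\mathbf{h}$ is held fixed throughout, the polynomial $P_{\mathbf{h}}(\omega,\boldsymbol{\psi})$ acts as a constant in $\boldsymbol{\lambda}$, and the entire content of the Proposition is to set $d=2$, $\nu=1$ in (\ref{A5}) and simplify the resulting constants.

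First I would record the ingredients of (\ref{A5}) at these parameter values. The normalizing powers give $(2\pi)^{d/2}=(2\pi)^{1}=2\pi$ and $2^{d/2}=2^{1}=2$, whose product supplies the factor $4\pi$ in the denominator. The exponent attached to $|P_{\mathbf{h}}(\omega,\boldsymbol{\psi})|^{2}$ is $2\nu-\tfrac{d}{2}=2-1=1$, so that term contributes exactly $|P_{\mathbf{h}}(\omega,\boldsymbol{\psi})|^{2}$. Finally the ratio of Gamma functions collapses, since $\Gamma(2\nu-\tfrac{d}{2})=\Gamma(1)=1$ and $\Gamma(2\nu)=\Gamma(2)=1$, hence $\Gamma(2\nu-\tfrac{d}{2})/\Gamma(2\nu)=1$. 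Assembling these pieces in (\ref{A5}) yields
\[
g_{0}^{(\mathbf{h})}(\omega,\boldsymbol{\psi})=\frac{\sigma_{\eta}^{2}}{4\pi\,|P_{\mathbf{h}}(\omega,\boldsymbol{\psi})|^{2}}=\frac{\sigma_{\eta}^{2}}{4\pi}|P_{\mathbf{h}}(\omega,\boldsymbol{\psi})|^{-2},
\]
which is the asserted expression.

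The one point that genuinely needs attention, rather than mere arithmetic, is that the parameter combination $2\nu-\tfrac{d}{2}$ must remain strictly positive for (\ref{A5}) to be valid: here it equals $1>0$, so (\ref{A5}) is legitimately applicable and the limit $\|\mathbf{L}\|\to 0$ of the modified Bessel expression (\ref{A4}) is indeed finite and reduces to (\ref{A5}). I expect no real obstacle beyond this check. As an independent verification one could bypass (\ref{A5}) and compute $g_{0}^{(\mathbf{h})}(\omega)$ directly from the model (\ref{A2}) by integrating the space--time spectral density (\ref{A3}) over $\boldsymbol{\lambda}\in\mathbb{R}^{2}$; in polar coordinates this reduces to the elementary radial integral of $(r^{2}+|P_{\mathbf{h}}(\omega,\boldsymbol{\psi})|^{2})^{-2}$, which confirms the $|P_{\mathbf{h}}(\omega,\boldsymbol{\psi})|^{-2}$ dependence (the overall multiplicative constant being pinned down by the DFT normalization convention), and so provides a self-contained cross-check of this special case.
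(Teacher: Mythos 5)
Your proof is correct and is essentially the paper's own (implicit) argument: the Proposition is stated there without a separate proof precisely because it is the $d=2$, $\nu=1$ specialization of (\ref{A5}), and your bookkeeping --- $(2\pi)^{d/2}2^{d/2}=4\pi$, exponent $2\nu-\tfrac{d}{2}=1$, $\Gamma(2\nu-\tfrac{d}{2})/\Gamma(2\nu)=\Gamma(1)/\Gamma(2)=1$, plus the check that $2\nu-\tfrac{d}{2}>0$ --- is exactly what is needed. One caution about your proposed cross-check: integrating (\ref{A3}) over $\boldsymbol{\lambda}\in\mathbb{R}^{2}$ actually yields $\frac{\sigma_{\eta}^{2}}{8\pi^{2}}|P_{\mathbf{h}}(\omega,\boldsymbol{\psi})|^{-2}$ (and the $\Vert\mathbf{L}\Vert\rightarrow 0$ limit of (\ref{A4}) agrees with this), which differs from (\ref{A5})--(\ref{A6}) by a factor of $2\pi$ --- an internal normalization inconsistency in the paper itself (the factor $(2\pi)^{d/2}$ in (\ref{A5}) appears to be a typo for $(2\pi)^{d}$) --- so your hedge that this computation confirms only the $|P_{\mathbf{h}}(\omega,\boldsymbol{\psi})|^{-2}$ dependence and not the multiplicative constant is well advised, and the cross-check should not be presented as pinning down the constant in (\ref{A6}).
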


The above result shows that the function $|P_{\mathbf{h}}(\omega
,\boldsymbol{\psi})|^{2}$ is related to the stationary temporal spectrum of
the process $\left\{  {X_{t}^{\mathbf{h}}}(\mathbf{s})\right\}  $. We note
further that $f_{X}^{h}(\boldsymbol{\lambda},\omega)$ is the spatio-temporal
spectrum and ${g_{0}^{(\mathbf{h})}}(\omega,\mathbf{\psi})$ is the stationary
temporal spectrum of the process $\left\{  {X_{t}^{\mathbf{h}}}(\mathbf{s}%
)\right\}  $. For large $n$ and for a fixed $\mathbf{h}$,
$Var(J_{\mathbf{s,s+h}}^{(x)}(\omega))\approx{g_{0}^{(\mathbf{h})}}%
(\omega,\mathbf{\psi}),\quad|\omega|\leq\pi$. Once again we note that the
spectral density function ${f_{X}^{(\mathbf{h})}}(\boldsymbol{\lambda}%
,\omega)$ is non-separable. \newline In the above, we have shown that we can
obtain a parametric expression, in a close form, for the spectral density
function of the intrinsic process. The spectral density function is given by
${g_{0}^{(\mathbf{h})}}(\omega,\mathbf{\psi}).$ In the following section, we
consider the estimation of parameter vector $\underline{\boldsymbol{\psi}}$
using the discrete Fourier transform of the process$\{$ ${X_{t}^{\mathbf{h}}%
}(\mathbf{s})\}$.

\section{Estimation of the parameters of the frequency variogram of the
intrinsic process.}

\cite{math}, \cite{Cressie}, \cite{Stein}, \cite{Yun}, and many others have
stressed the importance of the variogram in Kriging and in view of this,
several methods of estimation of the variogram in the case of spatial
processes have been proposed. \cite{Yun} have proposed nonparametric
estimation of the variogram, \cite{Huang} proposed the estimation of the
variogram and its spectrum. If one assumes that the intrinsic process
satisfies a specific model \ which have parameters which are usually unknown,
then one needs to estimate the parameters of the model. In section 7,we have
obtained an expression for \ the spectral density function \ of the intrinsic
process assuming that the process satisfies the model given in Theorem 2 and
we have seen that the spectrum depends on the parameter vector $\psi$. We
consider the estimation of the parameter vector $\psi$ from the data $\left\{
{X_{t}^{\mathbf{h}}}(\mathbf{s})\right\}  $.

Our object here is to estimate $\boldsymbol{\psi}$ of ${g_{0}^{(\mathbf{h})}%
}(\omega,\boldsymbol{\psi})$ given the discrete Fourier Transforms
$\{J_{\mathbf{s,s+h}}^{(x)}(\omega_{k});i=1,2...m;k=1,2...[\frac{n}{2}]$
\ obtained from the intrinsic processes $\{{X_{t}^{\mathbf{h}}}(\mathbf{s}%
_{i});t=1,2,...,n;i=1,2,...m\}$. Let the set $N(\mathbf{h})=\{\mathbf{s}%
_{i};i=1,2,...,m$, \textbf{ }$\mathbf{s}_{i},\mathbf{s}_{i}+\mathbf{h}%
\in\Omega\}$. If we \ are assuming that the DFT of the intrinsic process
satisfies the model (11) stated in the Theorem 2, then the parameters we have
to consider for the estimation are $\boldsymbol{\psi}$ of the Polynomial
$P_{\mathbf{h}}(\omega,\boldsymbol{\psi})$ related to the temporal spectrum
${g_{0}}^{(\mathbf{h})}(\omega,\mathbf{\psi})$ of the process\{ ${X_{t}%
^{\mathbf{h}}}(\mathbf{s}_{i})\}$. Here we obtain the likelihood function
using the DFT's, and the approach is similar to the method described in
\cite{Subba1}. We refer to the above paper for details.

Consider the Discrete Fourier Transforms $\{J_{\mathbf{s}_{i}\mathbf{,s}%
_{i}\mathbf{+h}}^{(x)}(\omega_{k}))\}$ corresponding to the time series
$\{Y_{t}(\mathbf{s}_{i})\}$, $\{Y_{t}(\mathbf{s}_{i}+\mathbf{h})\}$. We note
that for large $n$, the \ complex valued random variable $J_{\mathbf{s}%
_{i}\mathbf{,s}_{i}\mathbf{+h}}^{(x)}(\omega_{k})$ is asymptotically
distributed as complex normal with mean zero and variance ${g_{0}%
}^{(\mathbf{h})}(\omega_{k},\boldsymbol{\psi})$ (see \cite{Brilli} and
\cite{giraitis2012large}) and independent over distinct frequencies.. Let
$M=\left[  \frac{n}{2}\right]  $. Consider the $M$ dimensional complex valued
random vector
\[
L_{\left\Vert \mathbf{h}\right\Vert }(\omega)=\{J_{\mathbf{s}_{i}%
\mathbf{,s}_{i}\mathbf{+h}}^{(x)}(\omega_{1}),J_{\mathbf{s}_{i}\mathbf{,s}%
_{i}\mathbf{+h}}^{(x)}(\omega_{2}),...,J_{\mathbf{s}_{i}\mathbf{,s}%
_{i}\mathbf{+h}}^{(x)}(\omega_{M})\},
\]
which is distributed asymptotically as complex multivariate normal with mean
zero and variance covariance matrix with diagonal elements
\[
\left[  {g_{0}^{\left\Vert \mathbf{h}\right\Vert }}(\omega_{1}%
,\boldsymbol{\psi}),{g_{0}^{\left\Vert \mathbf{h}\right\Vert }}(\omega
_{2},\boldsymbol{\psi}),...,{g_{0}}^{\left\Vert \mathbf{h}\right\Vert }%
(\omega_{M},\boldsymbol{\psi})\right]  .
\]

We note that off diagonal elements of the covariance matrix are zero.
Proceeding as in \cite{Subba1}, we can show that the log likelihood function
$l(\psi/J_{\mathbf{s,s+h}}(\omega))$ is proportional to
\[
{Q_{n,i}^{(\mathbf{h})}}(\boldsymbol{\psi})=\sum_{k=1}^{M}\left[  \ln{g_{0}%
}^{\left\Vert \mathbf{h}\right\Vert }(\omega_{k},\boldsymbol{\psi}%
)+\frac{{I_{\mathbf{s}_{i},\mathbf{s}_{i+\mathbf{h}}}^{\text{ }x}}(\omega
_{k})}{{g_{0}}^{(\mathbf{h})}(\omega_{k},\psi)}\right]  .
\]

Now consider all the locations $(\mathbf{s}_{i},\mathbf{s}_{i}+\mathbf{h})$;
$i=1,2,...,m$ belonging to the set $N(\mathbf{h})$. Then we have the pooled
criterion%
\begin{equation}
Q_{n,N(\mathbf{h})}(\boldsymbol{\psi})=\frac{1}{|N(\mathbf{h})|}%
\sum_{(\mathbf{s}_{i},\mathbf{s}_{i}\in N(\mathbf{h}))}{Q_{n,i}^{(\mathbf{h}%
)}}(\boldsymbol{\psi}). \label{poobd1}%
\end{equation}

Suppose we have \ $H$ spatial distances $\{\mathbf{h}(l);l=1,2,...H\mathbf{\}}%
$ for which the intrinsic stationarity condition is satisfied then we can
define an over all measure for minimization,
\begin{equation}
Q_{n}(\boldsymbol{\psi})=\frac{1}{H}\sum Q_{n,N(\mathbf{h}_{l})}%
(\boldsymbol{\psi}). \label{A7}%
\end{equation}
We minimize \ (17) with respect to $\boldsymbol{\psi}$. The asymptotic
normality of the estimator of $\boldsymbol{\psi}$ can be proved using the
methodology described in \cite{Subba1}. For large $n$, we can show
\[
\sqrt{n}(\tilde{\boldsymbol{\psi}}-\boldsymbol{\psi})\overset{D}{\rightarrow
}N(0,\left[  \nabla^{2}Q_{n}(\boldsymbol{\psi})\right]  ^{-1}V\text{ }\left[
\nabla^{2}Q_{n}(\boldsymbol{\psi})\right]  \text{ }),
\]
where $V=\lim_{n\rightarrow\infty}Var\left[  \frac{1}{\sqrt{n}}\nabla
Q_{n,}(\boldsymbol{\psi})\right]  $, and $\nabla Q_{n}(\boldsymbol{\psi})$ is
a Jacobian vector of first order partial derivatives, and $\left[  \nabla
^{2}Q_{n}(\boldsymbol{\psi})\right]  $ is a Hessian matrix of second order
partial derivatives.

\section{Test for independence of $m$ spatial time series.}

So far we have considered the analysis of spatio-temporal data using various
frequency domain methods. We assumed that there is a second order dependence
in space and time. It is important to test for Independence over space and
time before modelling the data. \cite{Hine} proposed a test statistic for
testing spatio temporal independence; and the test proposed is as an extension
of Moran's test. In their book \cite{Cressie2} briefly discussed the test. In
this section, we propose a test for spatial independence using the Discrete
Fourier Transforms and the test is based on the test proposed by \cite{Wahb}
which is an extension of the classical test for independence used in
multivariate analysis. Here we briefly describe the test. Let.
\[
\underline{Y}_{t}^{\prime}=(Y_{t}(\mathbf{s}_{1}),Y_{t}(\mathbf{s}%
_{2}),...,Y_{t}(\mathbf{s}_{m})).
\]

We say, the multivariate time series $\{\underline{Y}_{t}\}$ is second order
stationary if (see \cite{Brock})

\begin{enumerate}
\item $E(\underline{Y}_{t})=\underline{\mu},$

\item $E(\underline{Y}_{t}-\underline{\mu})(\underline{Y}_{t+p}-\underline
{\mu})^{^{\prime}}=\underline{\Gamma}(p),$ where%
\begin{align*}
\underline{\mu}^{^{\prime}}  &  =(\mu_{1},\mu_{2},...,\mu_{m}),\\
\Gamma(p)  &  =(\sigma_{ij}(p)),\\
\sigma_{ij}(p)  &  =E\left(  Y_{t}(\mathbf{s}_{i})-\mu_{i}\right)  \left(
{Y}_{t+p}(\mathbf{s}_{j})-\mu_{j}\right)  ,(i,j=1,2,...,m),\\
\sigma_{ij}(p)  &  =\sigma_{ji}(-p).
\end{align*}

\end{enumerate}

Here we are assuming that the spatio-temporal data is temporally stationary
only and no assumption of spatial stationarity is assumed. We assume further
that $\underline{Y}_{t}$ is Gaussian. Define the complex valued random vector
\[
\underline{J}^{\prime}(\omega_{k})=(J_{\mathbf{s}_{1}}^{y}(\omega
_{k}),J_{\mathbf{s}_{2}}^{y}(\omega_{k}),...,J_{\mathbf{s}_{m}}^{y}(\omega
_{k})),
\]
where $J_{\mathbf{s}_{i}}^{y}(\omega_{k})$ is the DFT of the time series data
$\{Y_{t}(\mathbf{s}_{i})\}$, and $\omega_{k}=\frac{2\pi k}{n}$,
$(k=,1,...,\left[  \frac{n}{2}\right]  )$. We know that the random vector
$\underline{J}(\omega_{k})$ is distributed as complex normal with mean
$\mathbf{0}$ and variance covariance matrix $\underline{F}(\omega_{k}),$ where
$\underline{F}(\omega_{k})=\left[  E(J_{\mathbf{s}_{i}}^{y}(\omega
_{k}){J_{\mathbf{s}_{j}}^{y\ast}}(\omega_{k})\right]  $. We note that
$\underline{F}(\omega_{k})$ is a Hermitian matrix, with elements
\[
f_{\mathbf{s}_{i},\mathbf{s}_{j}}(\omega_{k})=E(J_{\mathbf{s}_{i}}^{y}%
(\omega_{k})J_{\mathbf{s}_{j}}^{y\ast}(\omega_{k}))=f_{\mathbf{s}%
_{j},\mathbf{s}_{i}}(-\omega_{k}).
\]
In the above $f_{\mathbf{s}_{i},\mathbf{s}_{i}}(\omega_{k})$ is the second
order spectral density function of the process $\left\{  Y_{t}(\mathbf{s}%
_{i})\right\}  $, and $f_{\mathbf{s}_{i},\mathbf{s}_{j}}(\omega_{k})$ is the
cross spectral density function of the process $\left\{  Y_{t}(\mathbf{s}%
_{i})\right\}  $ and $\left\{  Y_{t}(\mathbf{s}_{j})\right\}  $. The cross
spectral density function is usually a complex valued function.

If we assume that the spatio-temporal process $\left\{  Y_{t}(\mathbf{s}%
)\right\}  $ is stationary in space and time, and further assume that the
process is isotropic in space, then%

\begin{align*}
f_{\mathbf{s}_{i},\mathbf{s}_{i}}(\omega)  &  =f_{0}(\omega),\\
f_{\mathbf{s}_{i},\mathbf{s}_{j}}(\omega)  &  =f_{||\mathbf{s}_{i}%
-\mathbf{s}_{j}||}(\omega).
\end{align*}
In this case the matrix $\underline{F}(\omega)$ is real and symmetric, and all
the diagonal elements are equal to $f_{0}(\omega)$.

As pointed out earlier, for testing spatial independence we do not need the
assumption of spatial stationarity. Below we assume that the process is
Gaussian. Under the null hypothesis that the spatial process is spatially
independent, the spectral matrix $F(\omega)$ is a diagonal matrix for all
$|\omega|\leq\pi$. For constructing the test, we proceed as in \cite{Wahb}.
Consider the discrete Fourier transforms defined earlier. For each location
$s_{i}$, let the Fourier transform be given by $(J_{s_{i}}^{y}(\omega_{l}))$
where $\omega_{l}=\frac{2\pi j_{l}}{n}$, $j_{l}=(l-1)(2k+1)+(k+1)$;
$l=1,2,...,M_{1}$ where $M_{1}$ is chosen such that $2(k+1)M_{1}=\frac{n-1}%
{2}$. (Here we assume that the number of observations n, is odd.) As in
\cite{Wahb} we define the cross spectral estimator of $f_{s_{i},\mathbf{s}%
_{j}}(\omega)$ by
\[
\hat{f}_{s_{i},\mathbf{s}_{j}}(\omega_{l})=\frac{1}{2k+1}\sum_{j_{1}=-k}%
^{k}I_{i,j}(\omega_{l}+\frac{2\pi j_{1}}{n}),\quad(l=1,2,...,M_{1}),
\]
where the cross periodogram $I_{ij}(\omega_{l})=J_{s_{i}}^{y}(\omega
_{l}){J_{\mathbf{s}_{j}}^{y\ast}}(\omega_{l})$.

Let $\hat{F}(\omega_{l})=(\hat{f}_{s_{i},\mathbf{s}_{j}}(\omega_{l}%
))\quad(l=1,2,...,M_{1})$.

We note that the random matrices $\hat{F}(\omega_{l});l=1,2,...,M_{1}$, for
large $k$, are approximately distributed as random matrices $\tilde{F}%
(\omega_{l}),(l=1,2,...,M)$ which are distributed as complex Wishart, usually
denoted by $W_{c}(F,m,2k+1)$. \cite{Wahb} has shown that the likelihood ratio
test for testing the null hypothesis that the matrices $F(w_{l})$ are diagonal
for all $\left\{  \omega_{l}\right\}  $ leads to the test statistic, for each
$w_{l}$,%
\[
\tilde{\boldsymbol{\lambda}}_{l}=\frac{|\tilde{F}(\omega_{l})|}{\prod
_{j=1}^{m}\tilde{f}_{\mathbf{s}_{j},\mathbf{s}_{j}}(\omega_{l})}\quad\left(
l=1,2,...,M_{1}\right)  ,
\]
and the over-all test statistic to consider is $\boldsymbol{\Lambda}=-\frac
{1}{M_{1}}\sum\ln\tilde{\boldsymbol{\lambda}_{l}}$. For large $k$ and $M_{1}$,
under the null hypothesis, the statistic $\boldsymbol{\Lambda}$ is
asymptotically distributed as normal with mean%
\[
E(\boldsymbol{\Lambda})=\sum_{j=1}^{m-1}\frac{m-j}{k^{\prime}-j}%
\]
and variance%
\[
Var(\boldsymbol{\Lambda})=\frac{1}{M_{1}}\sum_{j=1}^{m-1}\frac{m-j}{\left(
k^{\prime}-j\right)  ^{2}}%
\]
where $k^{\prime}=2k+1$. Under the null hypotheses of spatial independence,
for large $k$ and $M$, the statistic $S=\frac{\boldsymbol{\Lambda
}-E(\boldsymbol{\Lambda})}{\sqrt{Var(\boldsymbol{\Lambda})}}$ is distributed
as standard normal. We note that if for each $\mathbf{s}_{i}$, $\left\{
Y_{t}(\mathbf{s}_{i})\right\}  $ is a Gaussian white noise, then the spectral
density function is given by $f_{\mathbf{s}_{i},\mathbf{s}_{i}}(\omega
)=\frac{{\sigma_{s_{i}}}^{2}}{2\pi}$, where ${\sigma_{\mathbf{s}_{i}}}^{2}$ is
the variance of the white noise. If the null hypothesis is both spatially and
temporally independent then the diagonal elements of the matrix $F(\omega
_{l})$ will be proportional to $({\sigma_{\mathbf{s}_{1}}}^{2},{\sigma
_{\mathbf{s}_{2}}}^{2},{\sigma_{\mathbf{s}_{3}}}^{2},,...,{\sigma
_{\mathbf{s}_{m}}}^{2})$, and all off diagonal elements will be zero.

\section{Appendix: Discrete Fourier Transforms.}

\label{App:AppendixA} In this section, we will briefly summarize some results
related to the Discrete Fourier Transforms, further details, we refer to
\cite{2013arXivSRT_GyT}., \cite{Brilli}, \cite{giraitis2012large}.

Let $\left\{  Y_{t}(\mathbf{s})\right\}  $, where $\left\{  \mathbf{s}%
\in\mathbb{R}^{d};t\in Z\right\}  $ denote a zero mean second order
spatially,temporally stationary process with spectral representation
\begin{equation}
Y_{t}(\mathbf{s})=\int_{-\infty}^{\infty}\int_{-\pi}^{\pi}e^{i(\mathbf{s}%
\cdot\boldsymbol{\lambda}+t\omega)}dZ_{y}(\boldsymbol{\lambda},\omega),
\label{eq35}%
\end{equation}
and let $\left\{  Y_{t}(\mathbf{s_{i}}));i=1,2,...,m;t=1,2,...,n\right\}  $ be
a sample from the process$\left\{  Y_{t}(\mathbf{s})\right\}  $. We note that
$Z_{y}(\boldsymbol{\lambda},\omega)$ is a zero mean complex valued function
with orthogonal increments and%
\begin{align*}
E[dZ_{y}(\boldsymbol{\lambda},\omega)]  &  =0,\\
E|dZ_{y}(\boldsymbol{\lambda},\omega)|^{2}  &  =dF_{y}(\boldsymbol{\lambda
},\omega),
\end{align*}
where $dF_{y}(\boldsymbol{\lambda},\omega)$ is a spectral measure, Let
$dF_{y}(\boldsymbol{\lambda},\omega)=f_{y}(\boldsymbol{\lambda},\omega
)d\boldsymbol{\lambda}d\omega$, where $f_{y}(\boldsymbol{\lambda},\omega)$ is
the spatio-temporal spectral density function of the process $\left\{
Y_{t}(\mathbf{s})\right\}  $. Define the Discrete Fourier Transform%
\[
J_{\mathbf{s}}^{y}(\omega)=\frac{1}{\sqrt{2\pi n}}\sum_{t=1}^{n}%
Y_{t}(\mathbf{s})e^{it\omega},|\omega|\leq\pi,\text{
\ \ \ \ \ \ \ \ \ \ \ \ \ \ \ \ \ \ }(19)
\]

\begin{proposition}
Let the spectral representation of the process $\left\{  Y_{t}(\mathbf{s}%
_{i})\right\}  $ be given by \ref{eq35}, and let $J_{\mathbf{s}}(\omega)$ be
the DFT of the sample $\left\{  Y_{t}(\mathbf{s});t=1,2,...n\right\}  $. Then
we have

\begin{enumerate}
\item $Y_{t}(\mathbf{s})=\sqrt{\frac{n}{2\pi}}\int J_{\mathbf{s}}^{y}%
(\omega)e^{it\omega}d\omega,$ \label{eq1}

\item $J_{\mathbf{s}}^{y}(\omega)\approx\int e^{is\boldsymbol{\lambda}}%
\sqrt{\frac{n}{2\pi}}dZ_{y}(\boldsymbol{\lambda},\omega).$ \label{eq2}
\end{enumerate}
\end{proposition}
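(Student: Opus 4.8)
The plan is to treat the two assertions separately, since they are of quite different natures: the first is an exact finite Fourier inversion requiring no limits, while the second is an asymptotic approximate-identity statement whose only real content is the control of an error in probability. For part (1) I would substitute the definition of $J_{\mathbf{s}}^{y}(\omega)$ into the right-hand side and interchange the finite sum with the integral over $\omega$, which is legitimate because $J_{\mathbf{s}}^{y}$ is a trigonometric polynomial in $\omega$ of degree $n$. This reduces the claim to the orthogonality relation
\[
\frac{1}{2\pi}\int_{-\pi}^{\pi}e^{i(t'-t)\omega}\,d\omega=\delta_{t,t'},\qquad t,t'\in\{1,\dots,n\},
\]
which holds because $t'-t$ is an integer. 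The factor $\sqrt{n/(2\pi)}$ cancels the $1/\sqrt{2\pi n}$ in the DFT, the sum over $t'$ collapses to its single surviving term, and one recovers $Y_{t}(\mathbf{s})$ exactly for each $t$ in the sampling range. The exponential sign is fixed by whichever DFT convention is adopted; nothing asymptotic is involved.

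For part (2) I would begin from the spectral representation (\ref{eq35}), insert it into the DFT, and interchange the finite sum with the stochastic integral (this is mere linearity, since the sum is finite, so no Fubini subtlety arises beyond the $L^{2}$ definition of the integral). Writing the temporal dummy frequency as $\nu$, the sum over $t$ factors out the Dirichlet kernel $D_{n}(\nu-\omega)=\sum_{t=1}^{n}e^{it(\nu-\omega)}$, giving
\[
J_{\mathbf{s}}^{y}(\omega)=\frac{1}{\sqrt{2\pi n}}\int_{\mathbb{R}^{d}}\int_{-\pi}^{\pi}e^{i\mathbf{s}\cdot\boldsymbol{\mu}}\,D_{n}(\nu-\omega)\,dZ_{y}(\boldsymbol{\mu},\nu).
\]
The heuristic that makes the proposition true is that $D_{n}(\nu-\omega)/\sqrt{2\pi n}$ concentrates its mass near $\nu=\omega$ as $n\to\infty$, so that the temporal integral collapses onto the increment of $Z_{y}$ at frequency $\omega$, with the normalization $\sqrt{n/(2\pi)}$ emerging precisely as the compensation for the $O(1/n)$ width of the band over which $D_{n}$ is appreciable. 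This delivers the stated approximation $J_{\mathbf{s}}^{y}(\omega)\approx\sqrt{n/(2\pi)}\int_{\mathbb{R}^{d}}e^{i\mathbf{s}\cdot\boldsymbol{\mu}}\,dZ_{y}(\boldsymbol{\mu},\omega)$.

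The hard part, and where I would concentrate the effort, is upgrading this concentration heuristic to a genuine $o_{p}(1)$ statement. I would define the error as the difference between $J_{\mathbf{s}}^{y}(\omega)$ and the claimed spectral integral, then compute its second moment using $E|dZ_{y}(\boldsymbol{\mu},\nu)|^{2}=f_{y}(\boldsymbol{\mu},\nu)\,d\boldsymbol{\mu}\,d\nu$ together with the orthogonality of increments. This variance reduces to a spatial integral of $f_{y}$ weighted in $\nu$ by the Fejér kernel $|D_{n}(\nu-\omega)|^{2}/(2\pi n)$, which is an approximate identity. Provided $f_{y}(\boldsymbol{\mu},\cdot)$ is continuous in the temporal frequency near $\omega$ and integrable in $\boldsymbol{\mu}$ (hypotheses implicit in the absolute continuity assumed for the spectral measure), the standard Fejér-kernel estimate forces this variance to zero, yielding mean-square and hence in-probability convergence. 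Since Proposition 2 of \cite{2013arXivSRT_GyT} carries out exactly this estimate, I would either reproduce the Fejér-kernel bound or invoke it directly, emphasizing that the only substantive requirements are the continuity and integrability of the spectral density that let the approximate identity act.
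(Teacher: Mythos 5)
Your proposal is correct and follows essentially the same route as the paper: part (1) by exact finite Fourier inversion of the DFT definition, and part (2) by substituting the spectral representation so that the Dirichlet kernel concentrates like a Dirac delta at $\nu=\omega$, which is precisely the paper's ``substitution and properties of the Dirac Delta function'' argument (with the rigorous mean-square error bound deferred, as you note, to Proposition 2 of \cite{2013arXivSRT_GyT}). You have merely supplied the details the paper omits.
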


\begin{proof}
By substitution and using the properties of Dirac Delta function, one can show
(\ref{eq2}). (\ref{eq1}) follows by inversion of (19). For details, refer to
\cite{2013arXivSRT_GyT}.
\end{proof}

Let $I_{\mathbf{s}}^{y}(\omega_{k})=|J_{\mathbf{s}}^{y}(\omega_{k})|^{2}$ be
the periodogram. The following results are well known (\cite{Priest},
\cite{Brilli})

\begin{enumerate}
\item $E(I_{\mathbf{s}}^{y}(\omega_{k}))=g_{\mathbf{s}}^{y}(\omega
_{k})+O(n^{-1})$

\item $Var(I_{\mathbf{s}}^{y}(\omega_{k}))={g_{\mathbf{s}}^{y2}}(\omega
_{k})+O(n^{-1}),\quad\omega_{k}\neq0,\pi,$

\item $Cov(I_{\mathbf{s}}^{y}(\omega_{k}),I_{\mathbf{s}}^{y}(\omega
_{l})=O(n^{-1})\quad if\quad\omega_{k}+\omega_{l}\neq0\quad(mod\quad2\pi),$ In
view of spatial stationarity, $g_{\mathbf{s}}^{y}(\omega)=g_{0}^{y}(\omega)$
for all $\mathbf{s}$, and
\[
g_{\mathbf{s}}^{y}(\omega)=\frac{1}{2\pi}\sum_{k}Cov(Y_{t}(\mathbf{s}%
),Y_{t+k}\left(  \mathbf{s}\right)  )e^{-i\omega k},|\omega|\leq\pi
\]

\item $Cov(J_{\mathbf{s}_{i}}^{y}(\omega_{k}),J_{\mathbf{s}_{j}}^{y}%
(\omega_{k}))=O(n^{-1}),\quad if\quad\omega_{k}+\omega_{l}\neq0\quad
(mod\quad2\pi)$

\item $Cov(J_{\mathbf{s}_{i}}^{y}(\omega_{k}),J_{\mathbf{s}_{j}}^{y}%
(\omega_{k}))=\frac{1}{2\pi}\sum\limits_{n=-\infty}^{\infty}c\left(
\mathbf{s}_{i}-\mathbf{s}_{j},n\right)  e^{-in\omega_{k}}=g_{\mathbf{s}%
_{i}-\mathbf{s}_{j}}\left(  \omega_{k}\right)  +O(n^{-1}).$If the process is
isotropic then the spectral density function $g_{\mathbf{s}_{i}-\mathbf{s}%
_{j}}\left(  \omega_{k}\right)  =$ $g_{||\mathbf{s}_{i}-\mathbf{s}_{j}%
||}(\omega_{k})$ which is a real valued function. .
\end{enumerate}

\textbf{Acknowledgements}. Part of the research reported in this paper was
done when one of the authors (Subba Rao) was visiting the CRRAO AIMSCS,
University of Hyderabad Campus, India which was funded by a grant from the
Department of Science and Technology, Government of India (grant no.
SR/S4/516/07). Also,we would like to thank Professor Noel Cressie for bringing
to our attention the paper of \cite{Hine}, and also for \ his comments on an
earlier paper which lead to some results given in this paper. We would \ like
to thank Professor Liudas Giraitis, Queen Mary University, London and \ Dr
Suhasini Subba Rao, Texas A\&M University for reading the paper and for making
many helpful comments. We would like to thank the two reviewers for many
suggestions which improved the presentation.

\end{document}